\theoremstyle{plain}
\newtheorem{thm}{Theorem}[section]
\newtheorem{lem}[thm]{Lemma}
\newtheorem{prop}[thm]{Proposition}
\theoremstyle{definition}
\theoremstyle{remark}
\newtheorem{rem}[thm]{Remark}
\numberwithin{equation}{section}
\begin{document}
\title{The first Grushin eigenvalue on cartesian product domains}%
\author{Paolo Luzzini}%
\author{Luigi Provenzano}%
\author{Joachim Stubbe}%

\address{Paolo Luzzini, Dipartimento di  Matematica `Tullio Levi-Civita', Universit\`a degli Studi di Padova, Italy {\tt pluzzini@math.unipd.it}}%
\address{Luigi Provenzano,  SBAI Sapienza Universit\`a di Roma, Via Antonio Scarpa 16, 00161 Roma, Italy {\tt luigi.provenzano@uniroma1.it}}
\address{Joachim Stubbe, EPFL, SB MATH SCI-SB-JS, Station 8, CH-1015 Lausanne, Switzerland {\tt Joachim.Stubbe@epfl.ch}}%


\date{\today}%
\begin{abstract} 
In this paper we consider the first eigenvalue $\lambda_1(\Omega)$ of the Grushin operator $\Delta_G:=\Delta_{x_1}+|x_1|^{2s}\Delta_{x_2}$ with Dirichlet boundary conditions on a bounded domain $\Omega$ of $\mathbb{R}^d= \mathbb{R}^{d_1+d_2}$. We prove that $\lambda_1(\Omega)$ admits a unique minimizer 
in the class of domains with prescribed finite volume which are the cartesian product of a set in $\mathbb{R}^{d_1}$ and a set in $\mathbb{R}^{d_2}$, and that the minimizer is the product of two balls $\Omega^*_1 \subseteq \mathbb{R}^{d_1}$ and 
$\Omega_2^* \subseteq \mathbb{R}^{d_2}$.  Moreover, we provide a lower bound for $|\Omega^*_1|$ and for 
$\lambda_1(\Omega_1^*\times\Omega_2^*)$.  Finally, we consider the limiting problem as $s$ tends to $0$ and to $+\infty$.

\bigskip
\noindent
{\it Key words:} Grushin operator, Schr\"odinger operator, eigenvalue problem, minimization, cartesian product domain.

\bigskip
\noindent
{\bf 2020 Mathematics Subject Classification:} 35P15, 35P20, 47A75, 35J70, 34L15.

\end{abstract}
\maketitle
\setcounter{page}{1}
\tableofcontents


\section{Introduction}

We consider the Grushin operator $\Delta_G$ in $\mathbb{R}^d$  defined by  
\begin{equation*}\label{Gdef}
\Delta_{G} := \Delta_{x_1} +|x_1|^{2s}\Delta_{x_2} \qquad s >0,
\end{equation*}
where  $d \in \mathbb{N}$, $d \geq 2$, $d_1, d_2 \in \mathbb{N}$, 
$d= d_1+d_2$, $x_1 \in \mathbb{R}^{d_1}$, $x_2 \in \mathbb{R}^{d_2}$. Here $x_1$ and $x_2$  denote the first $d_1$ and last   $d_2$ components of $x \in \mathbb{R}^d$ and $\Delta_{x_i}$   denotes the standard Laplacian with respect to  $x_i$, $i=1,2$.   

As one can immediately  realize, $\Delta_G$ is not uniformly elliptic since it degenerates to $\Delta_{x_1}$ on the 
$x_1$-axis. In addition,  if $s \in \mathbb{N}$, it can be written as
\[
\Delta_G = \sum_{i=1}^k X^2_i
\]
where $k \in \mathbb{N}$ and $\{X_i\}_{i=1,\ldots,k}$ is a family of smooth vector fields satisfying the H\"ormander 
condition,  i.e. $\{X_i\}_{i=1,\ldots,k}$ generates a Lie algebra  of maximum
rank at any point  (see H\"ormander \cite{Ho67}). However in general (i.e. for $s \not \in \mathbb{N}$) the H\"ormander condition fails to hold since the generating vector fields are not smooth. 

The operator $\Delta_G$ has been independently introduced  by Baouendi \cite{Ba67} and Grushin  \cite{Gr70, Gr71}. Later on, it has been generalized and further studied by several authors under different points of view. Here we mention, without the sake of completeness, Franchi and Lanconelli \cite{FrLa82, FrLa83, FrLa84} for the H\"older regularity of weak solutions and for the embedding of the associated Sobolev spaces, Garofalo and Shen \cite{GaSh94} for Carleman estimates and unique continuation results, D'Ambrosio \cite{Da04} for Hardy inequalities, Thuy and Tri \cite{ThTr12} and Kogoj and Lanconelli \cite{KoLa12} for semilinear problems. Finally we mention  Chen and Chen \cite{ChCh20}, Chen, Chen, Duan and Xu \cite{ChChDu17}, Chen, Chen and Li \cite{ChChLi20} and Chen and Luo \cite{ChLu15} for asymptotic bounds for eigenvalues.

It is well known that the spectrum of the problem
   \begin{equation}\label{Geigp}
\begin{cases}
-\Delta_{G} u = \lambda  u \qquad &\mbox{ in } \Omega,\\
u=0 \qquad &\mbox{ on } \partial \Omega, 
\end{cases}
\end{equation}
in a bounded domain (i.e. a connected open set)  $\Omega$ of $\mathbb{R}^d$ is made of eigenvalues of finite multiplicity that can be arranged in a divergent sequence:
\[
0 < \lambda_1(\Omega) \leq \cdots \leq \lambda_j(\Omega) \leq \cdots \nearrow +\infty.
\]
In the present paper we are interested in moving some steps toward the understanding of the minimization problem of the first eigenvalue $\lambda_1(\Omega)$ among domains $\Omega$ with prescribed finite volume.  Since the seminal works of Faber \cite{Fa23} and Krahn  \cite{Kr24}, it is known that  the ball minimizes the first eigenvalue of the Dirichlet Laplacian among all the domains with a  fixed volume (see also Henrot \cite{He06} for a monograph on optimization problems for eigenvalues of elliptic operators). The same problem for degenerate operators is far from being understood and, to the best
of our  knowledge, no conclusive results for the optimization of Grushin eigenvalues are available in the literature, not even for the minimization of the first eigenvalue. In particular, an optimal shape for the first eigenvalue is not even conjectured, neither in the simplest case  $d=2$, $s=1$, and in general it is not an euclidean ball (see Section \ref{sec:num})

It is worth mentioning that in \cite{LaMuLu21} the authors showed that, when $s  \in \mathbb{N}$, the symmetric functions of the eigenvalues 
$\{\lambda_n\}_{n \in \mathbb{N}}$ depend real analytically upon suitable perturbations of the domain and proved an explicit Hadamard-type formula for their shape differential. This formula is then used to characterize critical domains under isovolumetric  perturbations via an overdetermined problem, which for the first eigenvalue $\lambda_1$ with normalized eigenfunction $u_1$ consists of finding the domains such that the following problem is satisfied:
\begin{equation}
\begin{cases}
-\Delta_G u_1 =\lambda_1u_1 \qquad &\mbox{ in } \Omega,\\
u_1=0\qquad &\mbox{ on } \partial\Omega,\\
\nabla u_1 \cdot(\nu_{x_1}, |x_1|^s  \nu_{x_2})=\mbox{const.}\qquad &\mbox{ on } \partial\Omega.
\end{cases}
\end{equation} 
Here above, $\nu=(\nu_{x_1},\nu_{x_2})$ denotes the outer unit normal field to $\partial \Omega$.
To the best of our knowledge, the understanding of this kind of overdetermined problems for degenerate operators is at the moment limited and thus no information on critical domains can be extracted from them.

Our point of view in order to give a first and  partial  answer to the problem of minimizing the first 
Grushin eigenvalue is to consider the case in which  
  $\Omega$ is 
 the cartesian product of two bounded domains $\Omega_1 \subseteq \mathbb{R}^{d_1}$, $\Omega_2 \subseteq \mathbb{R}^{d_2}$. That is for $V>0$ fixed, we set
 \begin{equation*}
  \mathcal{A}(V) := \Big\{\Omega_1 \times \Omega_2 : \Omega_1 \subseteq \mathbb{R}^{d_1}, \Omega_2 \subseteq \mathbb{R}^{d_2},  \Omega_1,\Omega_2\mbox{ bounded domains, } |\Omega_1||\Omega_2|=V\Big\},
 \end{equation*}
 and we consider the minimization problem
 \begin{equation}\label{fkprob}
 \min_{\Omega \in \mathcal{A}(V)}\lambda_1(\Omega).
 \end{equation}
 By separation of variables, problem \eqref{Geigp} decouples into two problems. The first one is a problem for the 
 standard Laplacian in $\mathbb{R}^{d_2}$ and the second one for the Schr\"odinger operator with potential 
 $\mu |x_1|^{2s}$  in $\mathbb{R}^{d_1}$, where $\mu$ is the coupling constant.
 Our main result shows that problem \eqref{fkprob} admits a unique minimizer which is the product of two balls
 $\Omega_1^*$ and $\Omega_2^*$
 in $\mathbb{R}^{d_1}$ and $\mathbb{R}^{d_2}$, respectively (see Theorem \ref{thm:uniqmin}). 
 The main tool of the uniqueness proof relies on a differential inequality involving the second derivative of the
  first Schr\"odinger eigenvalue with respect to the coupling constant (see Proposition \ref{prop:pos}).
 As a further result, we provide some information on the localization of this unique minimum by proving a lower bound for $|\Omega_1^*|$, which in turn implies a lower bound for $\lambda_1(\Omega_1^*\times\Omega_2^*)$
 (see Propositions \ref{prop:lb} and \ref{prop:lbe}). Then, we study the asymptotic behavior of the problem when $s \to 0$ and $s \to +\infty$ and we deduce that our lower bounds are sharp in these limits.
Finally, we provide some numerical computations in the planar case, that is for 
$d_1=d_2=1$. We first numerically solve the minimization problem for some value of $s>0$ and then we also compute the first eigenvalue in the case of balls in $\mathbb{R}^2$ and we compare it with the first eigenvalue on rectangles.

The paper is organized as follows: Section \ref{sec:pre} contains some preliminaries on the eigenvalue problem for the 
Grushin operator $\Delta_G$.   In Section \ref{sec:min} we prove our main results on the minimization problem on cartesian product domains. In particular we prove that the minimization problem for the first Grushin eigenvalue 
admits a unique minimum, we provide some information on the localization of this minimum proving a lower bound, and 
we study the behavior of the problem when $s \to 0$ and $s \to +\infty$. Finally, in Section \ref{sec:num} we present 
the numerical computations.


\section{Preliminaries on the eigenvalue problem}\label{sec:pre}
  Let $\Omega$ be a bounded domain in 
$\mathbb{R}^d$. We retain the standard notation for the Lebesgue space $L^2(\Omega)$ of real-valued square integrable functions.
We denote by 
$H_G^{1}(\Omega)$  the space of functions in $L^2(\Omega)$ such that 
$\nabla_{x_1}u\in (L^2(\Omega))^{d_1}$  and 
 $ |x_1|^s\nabla_{x_2}u\in (L^2(\Omega))^{d_2}$.
The space $H_G^{1}(\Omega)$ is a Hilbert space with the following scalar product:
\begin{align*}
\langle u,v\rangle_{G,2} : = \langle u,v\rangle_{2} 
+\langle \nabla_{x_1}u,\nabla_{x_1} v\rangle_{2}
 +\langle  |x_1|^{s}\nabla_{x_2}u,  |x_1|^s\nabla_{x_2} v\rangle_{2} \quad \forall u,v \in H_G^{1}(\Omega).
\end{align*}
Here $ \langle \cdot,\cdot\rangle_{2}$ denotes the 
standard scalar product in $L^2(\Omega)$.  
Moreover, if $u \in H_G^{1}(\Omega)$ we set
\begin{equation}\label{defnablaG}
\nabla_G u : = \big(\nabla_{x_1}u, |x_1|^s\nabla_{x_2}u\big)
\end{equation}
and we refer to $\nabla_G u$ as the Grushin gradient of $u$. 
We denote by $H_{G,0}^{1}(\Omega)$ the closure of $C_c^\infty(\Omega)$ in $H_G^{1}(\Omega)$. 
Analogs of the Rellich-Kondrachov embedding theorem and of the Poincar\'e inequality hold in  $H_{G,0}^{1}(\Omega)$.
 That is, the following theorems hold (for a 
 proof we refer to Franchi and Serapioni \cite[Thm. 4.6]{FrSe87} and to D'Ambrosio \cite[Thm. 3.7]{Da04}, respectively).

  \begin{thm}[Rellich-Kondrachov]\label{RK}
Let $\Omega$ be a bounded domain in $\mathbb{R}^d$.  Then the space $H_{G,0}^{1}(\Omega)$ 
is compactly embedded in $L^2(\Omega)$.
\end{thm}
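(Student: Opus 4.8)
The plan is to deduce compactness from the classical Fréchet--Kolmogorov--Riesz criterion applied to functions extended by zero to all of $\mathbb{R}^d$. Let $\{u_n\}$ be a bounded sequence in $H_{G,0}^{1}(\Omega)$, so that $\norm{u_n}_{L^2(\Omega)}$, $\norm{\nabla_{x_1}u_n}_{L^2(\Omega)}$ and $\norm{|x_1|^s\nabla_{x_2}u_n}_{L^2(\Omega)}$ are uniformly bounded. Extending each $u_n$ by zero outside $\Omega$ (legitimate since $u_n$ is a limit of functions in $C_c^\infty(\Omega)$), I obtain a bounded family in $L^2(\mathbb{R}^d)$ whose supports all lie in the fixed bounded set $\Omega$. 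Hence uniform $L^2$-boundedness and tightness at infinity are immediate, and the whole matter reduces to proving uniform $L^2$-equicontinuity of translations, i.e. that $\sup_n\norm{u_n(\cdot+h)-u_n}_{L^2(\mathbb{R}^d)}\To 0$ as $h\To 0$. By density of $C_c^\infty(\Omega)$ it suffices to establish the relevant difference-quotient bounds for smooth functions and pass to the limit.

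Translations in the $x_1$ variables are handled by the standard estimate $\norm{u_n(\cdot+h)-u_n}_{L^2}\le\abs{h}\,\norm{\nabla_{x_1}u_n}_{L^2}$ for $h$ in the $x_1$-directions, which is uniform in $n$ by hypothesis. The genuine difficulty, and the only place where the degeneracy of $\Delta_G$ on $\{x_1=0\}$ intervenes, lies in controlling translations in the $x_2$ variables, since there I only control the weighted gradient $|x_1|^s\nabla_{x_2}u_n$. I would split $\Omega$ into the region $\{|x_1|>\delta\}$ and the thin slab $S_\delta:=\{|x_1|\le\delta\}\cap\Omega$. On $\{|x_1|>\delta\}$ the weight is bounded below, $\nabla_{x_2}u_n=|x_1|^{-s}\,|x_1|^s\nabla_{x_2}u_n$ lies in $L^2$ with $\norm{\nabla_{x_2}u_n}_{L^2(\{|x_1|>\delta\})}\le\delta^{-s}\norm{|x_1|^s\nabla_{x_2}u_n}_{L^2}$; since an $x_2$-translation leaves the $x_1$ coordinate fixed, a translation of size $\abs{k}$ in $x_2$ contributes at most $\abs{k}\,\delta^{-s}\norm{|x_1|^s\nabla_{x_2}u_n}_{L^2}$ on this region, which for fixed $\delta$ tends to $0$ as $k\To 0$, uniformly in $n$.

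It remains to make the slab contribution uniformly small, and this is the crux. The key estimate is that $\int_{S_\delta}\abs{u_n}^2\,dx\le\omega(\delta)\,\norm{\nabla_{x_1}u_n}_{L^2(\Omega)}^2$ for some modulus $\omega(\delta)\To 0$ as $\delta\To 0^+$, independent of $n$. To prove it I would fix $x_2$ and regard $x_1\mapsto u_n(x_1,x_2)$ as a compactly supported $H^1$ function on $\mathbb{R}^{d_1}$; writing it in polar coordinates and integrating $\partial_\rho u_n$ from a radius $\rho$ out to the boundary of its support (where $u_n$ vanishes after extension by zero), a Cauchy--Schwarz estimate gives $\int_{\{|x_1|<\delta\}}\abs{u_n(x_1,x_2)}^2\,dx_1\le\omega(\delta)\int_{\mathbb{R}^{d_1}}\abs{\nabla_{x_1}u_n(x_1,x_2)}^2\,dx_1$, with $\omega(\delta)=C\delta$ when $d_1=1$, $\omega(\delta)=C\delta^2\log(1/\delta)$ when $d_1=2$, and $\omega(\delta)=C\delta^2$ when $d_1\ge 3$. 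Integrating in $x_2$ over the bounded projection of $\Omega$ yields the claimed bound. Since an $x_2$-translation preserves the $x_1$-slab and is measure preserving, the slab part of the difference is then bounded by $4\int_{S_\delta}\abs{u_n}^2\le 4\,\omega(\delta)\,\norm{\nabla_{x_1}u_n}_{L^2}^2$, uniformly in $n$.

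Combining the three estimates, given $\eps>0$ I would first choose $\delta$ so small that the slab terms fall below $\eps/2$ uniformly in $n$, and then choose the translation size small enough that the $\{|x_1|>\delta\}$ term and the $x_1$-translation term fall below $\eps/2$; this yields the required uniform equicontinuity and, by Fréchet--Kolmogorov--Riesz, the precompactness of $\{u_n\}$ in $L^2(\Omega)$, which is exactly the asserted compact embedding. The main obstacle throughout is the vanishing of the weight $|x_1|^{2s}$ on $\{x_1=0\}$, which forbids any direct control of $\nabla_{x_2}u_n$; the device that overcomes it is the one-dimensional Poincaré-type estimate in the $x_1$-slab, whose gain $\omega(\delta)\To 0$ is traded against the loss $\delta^{-s}$ incurred away from the degeneracy set.
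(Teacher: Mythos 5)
Your argument is correct, but it cannot be matched against a proof in the paper because the paper contains none: the authors simply refer to Franchi and Serapioni \cite[Thm.~4.6]{FrSe87}, where compactness is obtained for a general class of strongly degenerate elliptic operators via the geometric machinery of the metric associated with the degenerate vector fields. Your route is genuinely different and self-contained: a direct verification of the Fr\'echet--Kolmogorov--Riesz criterion that exploits the product structure of the Grushin weight. All three ingredients check out: (i) $x_1$-translations are controlled by $\nabla_{x_1}u_n$ in the standard way; (ii) $x_2$-translations on $\{|x_1|>\delta\}$ cost a factor $\delta^{-s}$, harmless for fixed $\delta$; (iii) the slab is absorbed by the sliced Poincar\'e/Hardy-type bound $\int_{\{|x_1|<\delta\}}u^2\,dx_1\le\omega(\delta)\int_{\mathbb{R}^{d_1}}|\nabla_{x_1}u|^2\,dx_1$, and your case analysis ($\omega(\delta)=C\delta$ for $d_1=1$, $C\delta^2\log(1/\delta)$ for $d_1=2$, $C\delta^2$ for $d_1\ge3$, with $C$ depending only on the diameter of the support) is accurate --- for $d_1\ge 3$ this is Hardy's inequality, while for $d_1\le 2$ the radial integration out to the support boundary supplies what Hardy does not. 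The quantifier structure is also sound: the slab term is not small as the translation $h\To 0$, only uniformly small in $\delta$, but that is exactly what equicontinuity requires. Two points you leave implicit and should spell out: a general translation must be decomposed, via the triangle inequality and translation invariance of the norm, into a pure $x_1$-translation and a pure $x_2$-translation; and the difference-quotient and slicing identities should be established for $C_c^\infty(\Omega)$ functions and transferred to $H^1_{G,0}(\Omega)$ by density, so that the zero-extensions have the weak derivatives you use. As for what each approach buys: yours is elementary and makes transparent that the only obstruction is the vanishing of the weight on $\{x_1=0\}$, traded against the loss $\delta^{-s}$ away from it; the result cited in the paper covers far more general weights and H\"ormander-type vector fields, at the price of importing substantial machinery.
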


\begin{thm}[Poincar\'e inequality]\label{PI}
Let $\Omega$ be a bounded domain in $\mathbb{R}^d$.  Then there exists $C>0$ such that
\begin{equation*}
\|u\|_{L^2(\Omega)} \leq C \|\nabla_G u\|_{(L^2(\Omega))^d} \qquad \forall u \in H_{G,0}^{1}(\Omega).
\end{equation*}
\end{thm}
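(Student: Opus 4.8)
The plan is to exploit the crucial structural feature of the Grushin gradient \eqref{defnablaG}: its $x_1$-component is the \emph{full, unweighted} Euclidean gradient $\nabla_{x_1}u$, and only the $x_2$-component carries the degenerate weight $|x_1|^s$. Since $\Omega$ is bounded, its projection onto $\mathbb{R}^{d_1}$ is contained in a cube, so a one-dimensional Poincar\'e estimate in a single $x_1$-direction already suffices, and the weighted $x_2$-part of $\nabla_G u$ need not be used at all. By the very definition of $H_{G,0}^1(\Omega)$ as the closure of $C_c^\infty(\Omega)$, and since both sides of the claimed inequality are continuous with respect to the $H_G^1(\Omega)$-norm, it is enough to prove the estimate for $u\in C_c^\infty(\Omega)$ and then pass to the limit by density.

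First I would fix such a $u$ and extend it by zero to all of $\mathbb{R}^d$, obtaining a compactly supported smooth function. Since $\Omega$ is bounded there exists $R>0$ such that the projection of $\Omega$ onto the $x_1$-space lies in $(-R,R)^{d_1}$; writing $x_1=(t,x_1')$ with $t$ the first component of $x_1$, the function $u$ then vanishes for $t\le -R$, so the fundamental theorem of calculus gives
\[
u(x)=\int_{-R}^{t}\partial_t u(\tau,x_1',x_2)\,d\tau .
\]
Applying the Cauchy--Schwarz inequality to this identity yields the pointwise bound
\[
|u(x)|^2\le 2R\int_{-R}^{R}|\partial_t u(\tau,x_1',x_2)|^2\,d\tau ,
\]
and integrating over $\mathbb{R}^d$ (using Fubini and that the right-hand side is independent of $t$, which produces a further factor $2R$ from the integration in $t$) gives
\[
\|u\|_{L^2(\Omega)}^2\le 4R^2\,\|\partial_t u\|_{L^2(\Omega)}^2\le 4R^2\,\|\nabla_{x_1}u\|_{(L^2(\Omega))^{d_1}}^2\le 4R^2\,\|\nabla_G u\|_{(L^2(\Omega))^d}^2 .
\]
Taking square roots yields the claimed inequality with $C=2R$, and the density of $C_c^\infty(\Omega)$ in $H_{G,0}^1(\Omega)$ concludes the proof.

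I do not expect a genuine obstacle here, precisely because the degeneracy of $\Delta_G$ affects only the $x_2$-directions: the $x_1$-gradient is uniformly elliptic, so the estimate reduces to the classical Euclidean Poincar\'e inequality in one direction. The only points requiring a little care are the zero-extension of functions in $C_c^\infty(\Omega)$ (legitimate because of compact support in $\Omega$) and the final density argument, which is immediate from the definition of $H_{G,0}^1(\Omega)$ and the continuity of the two norms involved. An alternative, less elementary route would be a compactness-and-contradiction argument based on the Rellich--Kondrachov Theorem \ref{RK}: if the inequality failed one would extract, from a normalized sequence with vanishing Grushin gradient, an $L^2$-limit $u$ with $\|u\|_{L^2(\Omega)}=1$ and $\nabla_G u=0$; the difficulty there is the nontrivial fact that $\nabla_G u=0$ forces $u=0$ in $H_{G,0}^1(\Omega)$, which is exactly what the direct argument sidesteps.
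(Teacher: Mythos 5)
Your proof is correct, but it is worth noting that the paper does not prove Theorem \ref{PI} at all: it simply cites D'Ambrosio \cite[Thm. 3.7]{Da04}, where the inequality is obtained as a consequence of Hardy-type inequalities for Grushin operators. Your route is genuinely different and more elementary. You exploit the fact that the $x_1$-component of $\nabla_G u$ is the full, unweighted gradient $\nabla_{x_1}u$, so the classical one-directional Poincar\'e argument (zero extension, fundamental theorem of calculus, Cauchy--Schwarz, Fubini) applies verbatim and the degenerate weight $|x_1|^s$ never enters. All steps check out: the pointwise bound, the integration giving $C=2R$, and the density passage from $C_c^\infty(\Omega)$ to $H^1_{G,0}(\Omega)$, which is legitimate since both sides of the inequality are continuous for the $H^1_G(\Omega)$-norm. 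What your approach buys is a short, self-contained proof with an explicit constant depending only on the width of $\Omega$ in a single $x_1$-direction; in fact boundedness of $\Omega$ in one such direction already suffices, which is slightly more general than the stated hypothesis. What the citation route buys is stronger and more general statements: D'Ambrosio's results are weighted Hardy inequalities valid for a broad class of Grushin-type operators, of which the Poincar\'e inequality is a corollary. Your closing remark about the compactness-and-contradiction alternative is also well taken: that argument would require knowing that $\nabla_G u=0$ forces $u=0$ in $H^1_{G,0}(\Omega)$, a point your direct proof sidesteps entirely.
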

We consider the eigenvalue problem for the Grushin operator with Dirichlet boundary conditions: 
\begin{equation}\label{ssp}
\begin{cases}
-\Delta_{G} u = \lambda  u \qquad &\mbox{ in } \Omega,\\
u=0 \qquad &\mbox{ on } \partial \Omega,
\end{cases}
\end{equation}
in the unknowns $\lambda$ (the eigenvalue) and $u$ (the eigenfunction). Problem \eqref{ssp} is understood in the weak sense as follows:   
\begin{equation}\label{wsp}
\int_{\Omega} \nabla_Gu \cdot \nabla_Gv\,dx  = \lambda \int_{\Omega} uv\,dx \qquad \forall v \in  H_{G,0}^{1}(\Omega)
\end{equation}
in the unknowns $\lambda \in \mathbb{R}$ and $u \in H_{G,0}^{1}(\Omega)$.
By  Theorem \ref{RK}, Theorem \ref{PI} and by a standard procedure in spectral theory, 
problem \eqref{wsp} can be recast as an eigenvalue problem for a compact self-adjoint operator in $L^2(\Omega)$.
In particular, the eigenvalues of equation \eqref{wsp} have finite multiplicity and can be represented by means of a divergent sequence:
\[ 
0 < \lambda_1(\Omega) \leq \lambda_2(\Omega) \leq \cdots \leq\lambda_j(\Omega)\leq \cdots \nearrow +\infty.
\]
Moreover,  by the min-max principle (see  Davies \cite[\S 4.5]{Da95}),  the following variational characterization holds:
\begin{equation*}\label{min-max}
\lambda_j(\Omega) = \min_{\substack{E \subseteq H_{G,0}^{1}(\Omega)\\ \mathrm{dim} E = j}} 
\max_{\substack{u \in E \\ u\neq 0}} \frac{\int_{\Omega}|\nabla_G u|^2\,dx}{\int_{\Omega} u^2 \, dx} 
\qquad \forall j \in \mathbb{N}.
\end{equation*}
We note that by Monticelli and Payne \cite[Thm. 6.4]{MoPa09} there exists a non-negative eigenfunction $u_1$ corresponding to the first eigenvalue $\lambda_1(\Omega)$. In addition,  $\lambda_1(\Omega)$ is known to be simple 
if $s \in \mathbb{N}$ and $\Omega$ is connected and non-characteristic (see Chen and Chen \cite[Prop. A.2]{ChCh20})
or if $\Omega \setminus \{x_1=0\}$ is connected (see Monticelli and Payne \cite[Thm. 6.4]{MoPa09}).



\section{The eigenvalue problem in cartesian product domains}\label{sec:min}
Here we consider the eigenvalue problem for the Dirichlet Grushin operator \eqref{ssp} in cartesian product domains, so that it is possible to proceed by separation of variables. 
Let $\Omega_1 \subseteq \mathbb{R}^{d_1}$, $\Omega_2 \subseteq \mathbb{R}^{d_2}$ be two bounded  domains and let $\Omega= \Omega_1 \times \Omega_2$. We claim that the solutions $u$ of problem \eqref{ssp} can be written as
\[
u(x_1,x_2) = f(x_1)g(x_2) \qquad   (x_1,x_2) \in \Omega_1 \times \Omega_2.
\]
In this case, \eqref{ssp} becomes
\[
-g(x_2)\Delta_{x_1} f(x_1) - |x_1|^{2s}f(x_1)\Delta_{x_2} g(x_2) = \lambda f(x_1) g (x_2) \qquad  (x_1,x_2) \in  \Omega_1 \times \Omega_2,
\]
 which is equivalent to 
\[
g(x_2) \left( -\Delta_{x_1} f(x_1) -\lambda f(x_1)\right) = |x_1|^{2s} f(x_1) \Delta_{x_2} g(x_2) \qquad  (x_1,x_2) \in \Omega_1 \times \Omega_2.
\]
Separating the equations and imposing the boundary conditions we get that for some $\mu >0$ one has
\begin{equation}\label{eigsplit1}
\begin{cases}
-\Delta_{x_2} g = \mu g \qquad &\mbox{ in } \Omega_2,\\
g = 0\qquad &\mbox{ on } \partial\Omega_2,
\end{cases}
\end{equation}
and 
\begin{equation}\label{eigsplit2}
\begin{cases}
-\Delta_{x_1} f + \mu |x_1|^{2s}f  = \lambda f \qquad &\mbox{ in } \Omega_1,\\
f = 0\qquad &\mbox{ on } \partial\Omega_1.
\end{cases}
\end{equation}
The eigenvalue problem   is then splitted into two coupled eigenvalue problems, one 
for the Laplacian and the other for the Schr\"odinger operator with potential $\mu |x_1|^{2s}$. As it is well-known, problem \eqref{eigsplit1} admits a sequence of eigenvalues
\[
0 < \mu_1(\Omega_2) < \mu_2(\Omega_2) \leq \cdots \leq \mu_j(\Omega_2) \leq \cdots \nearrow +\infty,
\]
with corresponding eigenfunctions $\{g_j\}_{j \in \mathbb{N}}$ orthonormal in $L^2(\Omega_2)$, whereas problem \eqref{eigsplit2}, for each fixed $\mu>0$, admits a sequence of eigenvalues
\[
0 <E_{1}(\mu,\Omega_1) < E_{2}(\mu, \Omega_1) \leq \cdots \leq E_{j}(\mu, \Omega_1) \leq \cdots \nearrow +\infty,
\]
with eigenfunctions $\{f^\mu_j\}_{j \in \mathbb{N}}$ orthonormal in $L^2(\Omega_1)$.
We note that by the min-max principle, the first eigenvalue $E_{1}(\mu,\Omega_1)$ of problem \eqref{eigsplit2}  is given by 
\[
E_{1}(\mu,\Omega_1) = \min_{f \in H^1_0(\Omega_1)\setminus \{0\}}\frac{\int_{\Omega_1}|\nabla_{x_1}f|^2+\mu|x_1|^{2s}f^2\,dx_1}{\int_{\Omega_1}f^2\,dx_1}.
\] 
Here above, and throughout the paper, $H_0^1(\Omega_1)$ denotes the closure of $C_c^{\infty}(\Omega_1)$ 
with respect to the norm $\left(\|f\|_2^2+\|\nabla_{x_1} f\|_2^2\right)^{\frac{1}{2}}$ of  $H^1(\Omega_1)$. 
Therefore, a family of eigenvalues is given by $\{E_{j}\big(\mu_k(\Omega_2),\Omega_1\big)\}_{j,k \in \mathbb{N}}$ with associated eigenfunctions  $\left\{g_kf^{\mu_k(\Omega_2)}_j\right\}_{j,k \in \mathbb{N}}$.
The claim is proved since we observe   that $\left\{g_kf^{\mu_k(\Omega_2)}_j\right\}_{j,k \in \mathbb{N}}$ is a complete system 
in $L^2(\Omega_1\times\Omega_2)$, and then
\[
\{\lambda_n(\Omega)\}_{n \in \mathbb{N}}  = \{E_{j}\big(\mu_k(\Omega_2),\Omega_1\big)\}_{j,k \in \mathbb{N}} 
\]
and 
\begin{equation}\label{mml1}
\lambda_1(\Omega) = E_{1}\big(\mu_1(\Omega_2),\Omega_1\big) = \min_{f \in H^1_0(\Omega_1) \setminus \{0\}}\frac{\int_{\Omega_1}|\nabla_{x_1}f|^2+\mu_1(\Omega_2)|x_1|^{2s}f^2\,dx_1}{\int_{\Omega_1}f^2\,dx_1}.
\end{equation}
\begin{rem}
 Since the first eigenvalue of the Dirichlet Laplacian $-\Delta_{x_2}$ on $\Omega_2$ and the first eigenvalue of the Dirichlet Schr\"odinger  operator
$-\Delta_{x_1} + \mu |x_1|^{2s}$
 on $\Omega_1$ are well-known to be simple, it is immediately seen that in the case of a cartesian product domain 
 $\Omega = \Omega_1 \times \Omega_2$, $\Omega_1 \subseteq \mathbb{R}^{d_1}$ and  
 $\Omega_2 \subseteq \mathbb{R}^{d_2}$,
  the first eigenvalue  $\lambda_1(\Omega)$ 
is  simple without requiring any additional assumption. However, as already pointed out, the simplicity of the first Grushin eigenvalue  it is known to hold without requiring $\Omega$ to be a cartesian product domain  under some additional assumptions (see Monticelli and Payne
  \cite[Thm. 6.4]{MoPa09}, Chen and Chen \cite[Prop. A.2]{ChCh20}).
\end{rem}

\subsection{Existence of a minimum}\label{sec:minprod}

 We now start to consider the minimization problem for the first eigenvalue in cartesian product domains with a prescribed  volume.
 That is we fix $V>0$ and we consider the minimization problem   
  \begin{equation}\label{minpb}
 \min_{\Omega \in \mathcal{A}(V)}\lambda_1(\Omega),
 \end{equation}
where  
  \begin{equation*}
  \mathcal{A}(V) = \Big\{\Omega_1 \times \Omega_2 : \Omega_1 \subseteq \mathbb{R}^{d_1}, \Omega_2 \subseteq \mathbb{R}^{d_2}, \Omega_1,\Omega_2\mbox{ bounded domains, } |\Omega_1||\Omega_2|=V\Big\}.
 \end{equation*}

  \begin{rem}
  As it is well-known, if one removes a  zero capacity set from either $\Omega_1$ or $\Omega_2$, the eigenvalues of 
  problems \eqref{eigsplit1} and \eqref{eigsplit2} remain the same. Thus here in this paper we do not allow this kind of irregularity in the domains and when speaking of uniqueness of minimizers we always mean uniqueness up to sets of zero capacity (see also Henrot \cite[\S 3.2]{He06}).
  \end{rem}
  \begin{rem}
Instead of considering the minimization problem \eqref{minpb}, 
one  can for instance consider the more simple problem of minimizing $\lambda_1(\Omega)$  in the class of cartesian products with each product domain having prescribed volume, that is in 
\begin{align*}
\mathcal{B}(V_1,V_2) = \Big\{\Omega_1 \times \Omega_2 : \,\,&\Omega_1 \subseteq \mathbb{R}^{d_1}, \Omega_2 \subseteq \mathbb{R}^{d_2}, \Omega_1,\,\Omega_2\mbox{ bounded domains, } \\
&|\Omega_1|=V_1,|\Omega_2|=V_2\Big\}
\end{align*}
for some $V_1,V_2 >0$.
Since the ball in $\mathbb{R}^{d_2}$ with volume $V_2$, which we denote by $B_2(V_2)$,  is the unique minimizer of the first eigenvalue of the Dirichlet Laplacian among all the domains with volume $V_2$ (up to translation),
 it minimizes 
$\mu_1(\Omega_2)$. Accordingly,
in order to minimize $\lambda_1(\Omega)$ for $\Omega = \Omega_1 \times \Omega_2 \in \mathcal{B}(V_1,V_2)$, we must have
\[
\Omega_2 = B_2(V_2).
\] 
Also, by Benguria, Linde and Loewe \cite[Thm. 3.7 and Thm 4.2]{BeLiLo12} the ball $B_1(V_1)$ in $\mathbb{R}^{d_1}$ centered at zero  with  volume $V_1$ is the unique minimizer of the first eigenvalue of the
Schroedinger operator with potential $\mu_{1}(\Omega_2)|x|^{2s}$. Thus, the unique minimum of $\lambda_1(\Omega)$ in $ \mathcal{B}(V_1,V_2)$ is attained by
\[
\Omega = B_1(V_1) \times B_2(V_2),
\]
up to a translation of $B_2(V_2)$.
As one could expect, the minimization problem in $\mathcal{B}(V_1,V_2)$ is trivial and not general enough to capture the anisotropic nature of the problem.
\end{rem}
We  then return to the minimization problem 
\eqref{minpb}.
Let $\Omega = \Omega_1 \times \Omega_2 \in \mathcal{A}(V)$. We set
\begin{align*}
\tilde\Omega_j := |\Omega_j|^{-\frac{1}{d_j}}\Omega_j \quad  j =1,2,
\end{align*}
so that 
\[
|\tilde \Omega_j| = 1 \qquad  j=1,2.
\]
Let $B_1$ be the ball in $\mathbb{R}^{d_1}$ centered at zero with $|B_1|=1$ and $B_2$ a  ball in 
$\mathbb{R}^{d_2}$ with $|B_2|=1$. Then
\[
\mu_1(\Omega_2) = \mu_1(\tilde \Omega_2)|\Omega_2|^{-\frac{2}{d_2}} = \mu_1(\tilde \Omega_2)V^{-\frac{2}{d_2}}|\Omega_1|^{\frac{2}{d_2}}  \geq  \mu_1(B_2)V^{-\frac{2}{d_2}}|\Omega_1|^{\frac{2}{d_2}}.
\]
By equality \eqref{mml1} we have
\begin{align*}
&\lambda_1(\Omega) \\
&= \inf_{f \in H^1_0(\Omega_1)\setminus \{0\}}\frac{\int_{\Omega_1}|\nabla_{x_1} f|^2\,dx_1+\mu_1(\Omega_2)\int_{\Omega_1}|x_1|^{2s}f^2\,dx_1}{\int_{\Omega_1}f^2\,dx_1}\\
&=  \inf_{f \in H^1_0(\tilde \Omega_1)\setminus \{0\}}\Bigg( |\Omega_1|^{-\frac{2}{d_1}}\frac{\int_{\tilde \Omega_1}|\nabla_{x_1} f|^2\,dx_1}{\int_{\tilde \Omega_1}f^2\,dx_1} + \mu_1(\tilde \Omega_2)V^{-\frac{2}{d_2}}|\Omega_1|^{\frac{2}{d_2}+\frac{2s}{d_1}}\frac{\int_{\tilde \Omega_1}|x_1|^{2s}f^2\,dx_1}{\int_{\tilde \Omega_1}f^2\,dx_1}\Bigg)\\
&\geq  \inf_{f \in H^1_0(\tilde \Omega_1)\setminus \{0\}}\Bigg( |\Omega_1|^{-\frac{2}{d_1}}\frac{\int_{\tilde \Omega_1}|\nabla_{x_1} f|^2\,dx_1}{\int_{\tilde \Omega_1}f^2\,dx_1} + \mu_1(B_2)V^{-\frac{2}{d_2}}|\Omega_1|^{\frac{2}{d_2}+\frac{2s}{d_1}}\frac{\int_{\tilde \Omega_1}|x_1|^{2s}f^2\,dx_1}{\int_{\tilde \Omega_1}f^2\,dx_1}\Bigg)\\
&=|\Omega_1|^{-\frac{2}{d_1}}\inf_{f \in H^1_0(\tilde \Omega_1)\setminus \{0\}}\Bigg(\frac{\int_{\tilde \Omega_1}|\nabla_{x_1} f|^2\,dx_1}{\int_{\tilde \Omega_1}f^2\,dx_1} + \mu_1(B_2)V^{-\frac{2}{d_2}}|\Omega_1|^{\frac{2}{d_1}+\frac{2}{d_2}+\frac{2s}{d_1}}\frac{\int_{\tilde \Omega_1}|x_1|^{2s}f^2\,dx_1}{\int_{\tilde \Omega_1}f^2\,dx_1}\Bigg)\\
&= |\Omega_1|^{-\frac{2}{d_1}} E_{1}\left( \mu_1(B_2)V^{-\frac{2}{d_2}} |\Omega_1|^{\frac{2}{d_1}+\frac{2}{d_2}+\frac{2s}{d_1}},\tilde \Omega_1\right)\\
&\geq |\Omega_1|^{-\frac{2}{d_1}} E_{1}\left( \mu_1(B_2)V^{-\frac{2}{d_2}} |\Omega_1|^{\frac{2}{d_1}+\frac{2}{d_2}+\frac{2s}{d_1}},B_1\right).
\end{align*}
The previous inequality gives a lower bound for the first Grushin eigenvalue   in the case of cartesian product domains. We note that the lower bound is attained if and only if $\Omega$ is the product 
of two balls, the first one being centered at zero. Thus, in order to minimize the first eigenvalue 
$\lambda_1(\Omega)$ for $\Omega \in\mathcal{A}(V)$ we 
need to find, if exists,  the volume $|\Omega_1|$ which minimizes the quantity
\[
|\Omega_1|^{-\frac{2}{d_1}} E_{1}\left( \mu_1(B_2) V^{-\frac{2}{d_2}}|\Omega_1|^{\frac{2}{d_1}+\frac{2}{d_2}+\frac{2s}{d_1}},B_1\right).
\]
In other words, we need to find the minimizing $t \in (0,+\infty)$ of the function
\begin{equation}\label{minfunt}
t^{-\frac{2}{d_1}} E_{1}\left( \mu_1(B_2) V^{-\frac{2}{d_2}}t^{\frac{2}{d_1}+\frac{2}{d_2}+\frac{2s}{d_1}},B_1\right).
\end{equation}
For the sake of simplicity  and clarity in the computations, by using the substitution
\begin{equation}\label{subsig}
\sigma = \sigma(t) = \mu_1(B_2) V^{-\frac{2}{d_2}}t^{\frac{2}{d_1}+\frac{2}{d_2}+\frac{2s}{d_1}} \qquad t >0,
\end{equation}
we transform  the minimization problem for $\lambda_1(\Omega)$  into studying the minimizers of 
\begin{equation}\label{Fdef}
F(\sigma) := 
 \sigma^{-\frac{d_2}{d_1+(1+s)d_2}}E_{1}\left(\sigma,B_1\right) \qquad 
\sigma \in (0,+\infty).
\end{equation}
We are able to prove the following concerning the existence of a minimum for $F$.
\begin{prop}\label{minF}
The function $F$ admits a minimum in $(0,+\infty)$.
\end{prop}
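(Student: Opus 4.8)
The plan is to show that $F$ is continuous on $(0,+\infty)$ and tends to $+\infty$ as $\sigma\to0^+$ and as $\sigma\to+\infty$; the existence of a minimum then follows immediately from the extreme value theorem on a suitable compact subinterval. Throughout, write $\alpha:=\frac{d_2}{d_1+(1+s)d_2}$, so that $F(\sigma)=\sigma^{-\alpha}E_{1}(\sigma,B_1)$, and note that $\alpha>0$.

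For continuity I would use the variational formula \eqref{mml1}: for each fixed $f\in H^1_0(B_1)\setminus\{0\}$ the map $\sigma\mapsto\big(\int_{B_1}|\nabla_{x_1} f|^2+\sigma|x_1|^{2s}f^2\big)\big/\int_{B_1}f^2$ is affine in $\sigma$ with nonnegative slope. Hence $\sigma\mapsto E_{1}(\sigma,B_1)$, being an infimum of a family of nondecreasing affine functions, is concave and nondecreasing on $(0,+\infty)$, and in particular continuous there; thus $F$ is continuous on $(0,+\infty)$. For the endpoint $\sigma\to0^+$, dropping the nonnegative potential term in the Rayleigh quotient gives $E_{1}(\sigma,B_1)\ge c_0$, where $c_0>0$ is the first Dirichlet eigenvalue of $-\Delta$ on $B_1$; since $\alpha>0$, we obtain $F(\sigma)\ge c_0\,\sigma^{-\alpha}\to+\infty$.

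The only step requiring real work is the behaviour as $\sigma\to+\infty$, which I would handle by comparison with the same Schr\"odinger operator on all of $\mathbb{R}^{d_1}$. Let $\Lambda(\sigma)$ denote the first eigenvalue of $-\Delta+\sigma|x_1|^{2s}$ on $\mathbb{R}^{d_1}$ (discrete, as the potential is confining). Extending any $f\in H^1_0(B_1)$ by zero places it in the form domain of the full-space operator, and since $|x_1|^{2s}$ is bounded on $B_1$ the extension remains admissible; by domain monotonicity $E_{1}(\sigma,B_1)\ge\Lambda(\sigma)$. The scaling $f(x_1)=h\big(\sigma^{1/(2(1+s))}x_1\big)$ in the full-space Rayleigh quotient makes both the kinetic and the potential term scale by the same factor $\sigma^{1/(1+s)}$, yielding $\Lambda(\sigma)=\Lambda(1)\,\sigma^{1/(1+s)}$ with $\Lambda(1)>0$. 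Consequently $F(\sigma)\ge\Lambda(1)\,\sigma^{\frac{1}{1+s}-\alpha}$, and a direct computation gives $\frac{1}{1+s}-\alpha=\frac{d_1}{(1+s)\big(d_1+(1+s)d_2\big)}>0$, the positivity being exactly the condition $d_1\ge1$. Hence $F(\sigma)\to+\infty$.

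To conclude, I fix any $\sigma_0\in(0,+\infty)$ and, using the two limits just established, choose $0<a<\sigma_0<b$ with $F>F(\sigma_0)$ outside $[a,b]$; the minimum of the continuous function $F$ over the compact interval $[a,b]$ is then a global minimum on $(0,+\infty)$. The genuine obstacle is pinning down the sharp growth exponent at infinity: the whole argument hinges on identifying the rate $\sigma^{1/(1+s)}$ for $\Lambda(\sigma)$ and on verifying that it strictly exceeds $\alpha$, which is precisely where the dimensional assumption $d_1\ge1$ is used.
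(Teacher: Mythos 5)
Your proposal is correct and follows essentially the same route as the paper's proof: continuity of $F$ together with divergence as $\sigma\to0^+$ and $\sigma\to+\infty$, where the crucial estimate at infinity is obtained exactly as in the paper by extension by zero, comparison with the Schr\"odinger operator on $\mathbb{R}^{d_1}$, the scaling identity $E_1(\sigma,\mathbb{R}^{d_1})=\sigma^{\frac{1}{1+s}}E_1(1,\mathbb{R}^{d_1})$, and the same exponent computation $\frac{1}{1+s}-\frac{d_2}{d_1+(1+s)d_2}=\frac{d_1}{(1+s)(d_1+(1+s)d_2)}>0$. The only (harmless) deviations are that you get continuity from concavity of an infimum of affine functions rather than from the analytic perturbation theory the paper invokes, and near $\sigma=0$ you use the monotone lower bound $E_1(\sigma,B_1)\geq\mu_1(B_1)$ instead of passing to the limit; both are valid.
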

\begin{proof}
 Since the first Schr\"odinger eigenvalue $E_1(\sigma,B_1)$ is simple for all $\sigma \in (0,+\infty)$,  by classical analytic perturbation theory (see Rellich \cite{rellich37} and Nagy 
\cite{nagy}) it can be easily seen that $F$ is an analytic function of 
$\sigma \in (0,+\infty)$, and then in particular it is smooth.  A more up do date formulation of abstract perturbation results can be found for example in Lamberti and Lanza de Cristoforis \cite[Thm. 2.27]{lamberti-lanza}. 
Moreover, we claim that
\begin{itemize}
\item[i)] $\lim_{\sigma \to 0^+} F(\sigma) = +\infty$,
\item[ii)] $\lim_{\sigma \to +\infty} F(\sigma) = +\infty$.
\end{itemize}
Statements i), ii) would immediately imply the validity of the lemma. Statement i)   holds because $E_{1}\left( \sigma,B_1\right)$ converges to  the first eigenvalue of the Dirichlet Laplacian in $B_1$, which is strictly positive, when $\sigma$ tends to zero. 

 Next, we consider statement ii). Let $f_\sigma$ be an eigenfunction corresponding to $E_1\left( \sigma,B_1\right)$.  We still denote by $f_\sigma$ its extension by zero in $\mathbb{R}^{d_1}$.  We note that
 \begin{align}\label{minF:ineq}
 E_1\big( \sigma,B_1\big) &=
 \int_{B_1}|\nabla_{x_1} f_\sigma|^2\,dx_1 +  \sigma\int_{B_1}|x_1|^{2s}f_\sigma^2\,dx_1\\ \nonumber
 &=  \int_{\mathbb{R}^{d_1}}|\nabla_{x_1} f_\sigma|^2\,dx_1 +   \sigma\int_{\mathbb{R}^{d_1}}|x_1|^{2s}f_\sigma^2\,dx_1\\ \nonumber
 &\geq E_1\left(\sigma,\mathbb{R}^{d_1}\right)\\\nonumber
 &=\sigma^{\frac{1}{1+s}}E_1\left(1,\mathbb{R}^{d_1}\right).
 \end{align}
We have denoted by $E_1\left(\sigma,\mathbb{R}^{d_1}\right)$ the first eigenvalue of the Schr\"odinger operator $-\Delta_{x_1}+\sigma|x_1|^{2s}$ in $\mathbb{R}^{d_1}$. The inequality in \eqref{minF:ineq} holds because also 
$E_1\left(\sigma,\mathbb{R}^{d_1}\right)$ can be variationally characterized as
\begin{equation}\label{eq:mmpsR}
E_1\left(\sigma,\mathbb{R}^{d_1}\right)= \min_{f \in H^1_0(\mathbb{R}^{d_1})\setminus \{0\}}\frac{\int_{\mathbb{R}^{d_1}}|\nabla_{x_1}f|^2+\sigma|x_1|^{2s}f^2\,dx_1}{\int_{\mathbb{R}^{d_1}}f^2\,dx_1}
\end{equation}
and $H^1_0(\Omega_1) \subseteq H^1_0(\mathbb{R}^{d_1})$. Moreover, the last equality in \eqref{minF:ineq} follows by a simple rescaling argument. 
 Accordingly
\begin{align*}
 F(\sigma)=\sigma^{-\frac{d_2}{d_1+(1+s)d_2}}E_{1}\left(\sigma,B_1\right)
\geq  \sigma^{-\frac{d_2}{d_1+(1+s)d_2}+\frac{1}{1+s}}E_1\left(1,\mathbb{R}^{d_1}\right).
\end{align*}
Since
\[
-\frac{d_2}{d_1+(1+s)d_2}+\frac{1}{1+s} = \frac{d_1}{(d_1+(1+s)d_2)(1+s)} >0
\]
statement ii) holds.
\end{proof}
By the previous discussion and by Proposition \ref{minF}, we deduce that the first Grushin eigenvalue admits at least a minimum in the class of cartesian product domains. Namely, we have the following.
\begin{prop}\label{prop:min}
Let $V>0$. There exists $\Omega^* =\Omega^*_1 \times \Omega^*_2 \in \mathcal{A}(V)$ such that 
\[
\lambda_1(\Omega^*) \leq \lambda_1(\Omega) \qquad \forall\, \Omega \in \mathcal{A}(V).
\]
Moreover, $\Omega^*_1 \subseteq \mathbb{R}^{d_1}$, $\Omega^*_2\subseteq \mathbb{R}^{d_2}$ are two balls, the first one being centered in zero.  
\end{prop}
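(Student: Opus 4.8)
The plan is to assemble the lower bound derived above with the existence statement of Proposition \ref{minF}. First I would recall that for every $\Omega = \Omega_1 \times \Omega_2 \in \mathcal{A}(V)$ the chain of inequalities established just before Proposition \ref{minF} yields
\[
\lambda_1(\Omega) \geq |\Omega_1|^{-\frac{2}{d_1}} E_1\!\left(\mu_1(B_2)V^{-\frac{2}{d_2}}|\Omega_1|^{\frac{2}{d_1}+\frac{2}{d_2}+\frac{2s}{d_1}}, B_1\right),
\]
and, crucially, identify its equality cases. The first inequality in that chain is the Faber--Krahn inequality for $\mu_1$ in $\mathbb{R}^{d_2}$, which is an equality precisely when $\tilde\Omega_2$ (hence $\Omega_2$) is a ball; the last inequality is the Benguria--Linde--Loewe result, which is an equality precisely when $\tilde\Omega_1$ (hence $\Omega_1$) is a ball centered at the origin. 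Thus the displayed lower bound is attained exactly by products of two balls, the first one centered at zero.

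Next I would transfer the minimization over admissible volumes to the minimization of $F$. Writing $t = |\Omega_1| \in (0,+\infty)$ and using the substitution $\sigma = \sigma(t) = \mu_1(B_2)V^{-\frac{2}{d_2}} t^{\frac{2}{d_1}+\frac{2}{d_2}+\frac{2s}{d_1}}$ of \eqref{subsig}, a direct computation shows that the right-hand side above equals $C(V)\,F(\sigma(t))$ for a positive constant $C(V)$ depending only on $V,s,d_1,d_2$; this is precisely the content of the passage from \eqref{minfunt} to \eqref{Fdef}. Since the exponent $\frac{2}{d_1}+\frac{2}{d_2}+\frac{2s}{d_1}$ is strictly positive, $\sigma(\cdot)$ is a strictly increasing bijection of $(0,+\infty)$ onto itself, so minimizing $t \mapsto t^{-\frac{2}{d_1}} E_1(\dots,B_1)$ over $t$ is equivalent to minimizing $F$ over $\sigma$. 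By Proposition \ref{minF} the latter admits a minimizer $\sigma^* \in (0,+\infty)$; I then set $t^* := \sigma^{-1}(\sigma^*)$.

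Finally I would construct the minimizer and verify optimality. Let $\Omega_1^*$ be the ball in $\mathbb{R}^{d_1}$ centered at zero with $|\Omega_1^*| = t^*$ and $\Omega_2^*$ a ball in $\mathbb{R}^{d_2}$ with $|\Omega_2^*| = V/t^*$, so that $\Omega^* := \Omega_1^* \times \Omega_2^* \in \mathcal{A}(V)$. For this choice both the Faber--Krahn and the Benguria--Linde--Loewe inequalities are equalities, whence $\lambda_1(\Omega^*) = C(V) F(\sigma^*)$. On the other hand, for an arbitrary $\Omega \in \mathcal{A}(V)$ the lower bound together with the definition of $\sigma^*$ gives $\lambda_1(\Omega) \geq C(V) F(\sigma(|\Omega_1|)) \geq C(V) F(\sigma^*) = \lambda_1(\Omega^*)$, which is the desired conclusion.

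As for the main difficulty: the genuine analytic work, namely showing that the reduced one-variable problem actually attains a minimum rather than having its infimum escape to $0$ or $+\infty$, is already carried out in Proposition \ref{minF}. What remains, and is the only point requiring real care, is the bookkeeping of the two equality cases, so as to guarantee that the infimum over $\mathcal{A}(V)$ is \emph{attained} exactly by a product of balls with the prescribed centering rather than merely approached.
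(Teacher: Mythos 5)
Your proposal is correct and follows essentially the same route as the paper: the paper's proof of Proposition \ref{prop:min} is exactly the chain of inequalities preceding \eqref{minfunt} (Faber--Krahn for $\mu_1(\Omega_2)$ and the Benguria--Linde--Loewe result for $E_1(\cdot,\Omega_1)$, with equality precisely for products of balls, the first centered at zero), followed by the substitution \eqref{subsig} and the existence of a minimizer of $F$ from Proposition \ref{minF}. Your additional bookkeeping --- the bijectivity of $\sigma(\cdot)$, the constant $C(V)$, and the explicit verification that the constructed product of balls attains the infimum --- merely spells out details the paper leaves implicit in the phrase ``by the previous discussion.''
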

Our next step is to show that such a minimum is unique (up to translation of $\Omega_2$).  To this aim,
we need to develop some preliminary results. 
\subsection{The Schr\"odinger eigenvalue problem in the ball}
In this section we prove a differential inequality involving the second derivative of the first Schr\"odinger 
eigenvalue of \eqref{eigsplit2} in a ball with respect to the coupling constant $\mu$.

Let $R>0$, $\mu >0$. For the sake of brevity we set
\[
E_1(\mu,R) := E_1\big(\mu,B(0,R)\big), 
\] 
where $E_1\big(\mu,B(0,R)\big)$  is the first eigenvalue of  problem \eqref{eigsplit2}  in 
$B(0,R) = \{x_1 \in \mathbb{R}^{d_1} : |x_1|<R\}$. By using spherical coordinates, since the first eigenfunction is radial, problem \eqref{eigsplit2} for $E_1(\mu,R)$ can be written as
\begin{equation}\label{RadScr}
\begin{cases}
-v''-\frac{d_1-1}{r}v' + \mu r^{2s}v  = E_1(\mu,R) v \qquad \mbox{ on } (0,R),\\
v(R)=0.
\end{cases}
\end{equation}
Let  $v$  be the unique non-negative solution of 
\eqref{RadScr} normalized in $L^2\big((0,R), r^{d_1-1}dr\big)$.
\begin{rem}
As already noted in the proof of Proposition \ref{minF}, since $E_1(\mu,R)$ is simple, classical analytic perturbation theory implies that $E_1(\mu,R)$ and its corresponding eigenfunction $v$ depend analytically upon $\mu >0$. 
Accordingly, the computations performed in this section involving the derivatives of 
 $E_1(\mu,R)$ and  $v$ with respect to $\mu$ are justified.
\end{rem}
As a first step, we need some integral identities.
\begin{lem}\label{intid}
Let $R>0$, $\mu >0$. Let  $v$  be the unique non-negative solution of 
\eqref{RadScr} normalized in $L^2\big((0,R), r^{d_1-1}dr\big)$. 
\begin{align}\label{intid1}
&\int_0^R (v')^2r^{d_1-1}\,dr+\mu\int_0^R r^{2s+d_1-1}v^2\,dr = E_1(\mu,R),\\\label{intid2}
&\int_0^R(v')^2r^{d_1-1}\,dr-\frac{R^{d_1}}{2}(v')^2(R)=s\mu\int_0^Rr^{2s+d_1-1}v^2\,dr,\\\label{intid3}
&\dot E_1(\mu, R) := \frac{\partial }{\partial \mu} E_1(\mu, R) = \int_0^Rr^{2s+d_1-1}v^2\,dr.
\end{align}
\end{lem}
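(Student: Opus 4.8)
The plan is to establish the three identities in order, writing the radial operator in self-adjoint (divergence) form. Since
\[
-v''-\frac{d_1-1}{r}v' = -r^{1-d_1}\big(r^{d_1-1}v'\big)',
\]
equation \eqref{RadScr} reads $-(r^{d_1-1}v')'+\mu r^{2s+d_1-1}v = E_1(\mu,R)\,r^{d_1-1}v$ on $(0,R)$, together with the normalization $\int_0^R v^2 r^{d_1-1}\,dr=1$. The smoothness of the radial eigenfunction in the interior, the condition $v(R)=0$, and the decay of products such as $r^{d_1-1}v'v$ as $r\to 0^+$ (which holds since $v'(0)=0$) guarantee that every boundary term appearing below is well defined. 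For \eqref{intid1} I would test the equation against $v$: multiplying by $v$, integrating over $(0,R)$, and integrating by parts once moves the derivative onto $v$ and produces $\int_0^R(v')^2 r^{d_1-1}\,dr$; the boundary contribution $[r^{d_1-1}v'v]_0^R$ vanishes, at $r=R$ because $v(R)=0$ and at $r=0$ by the decay just noted. Using the normalization on the right-hand side yields \eqref{intid1} immediately.

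For \eqref{intid3} I would invoke the Feynman--Hellmann principle, which is legitimate because $E_1(\mu,R)$ and $v$ depend analytically on $\mu$ (as recorded in the preceding remark). Writing $\dot v=\partial_\mu v$ and differentiating the identity $E_1=\int_0^R(v')^2 r^{d_1-1}\,dr+\mu\int_0^R r^{2s+d_1-1}v^2\,dr$ of \eqref{intid1} in $\mu$, one obtains among the terms the combination $2\int_0^R v'\dot v' r^{d_1-1}\,dr+2\mu\int_0^R r^{2s+d_1-1}v\dot v\,dr$. These cancel: testing the eigenvalue equation against $\dot v$ gives $\int_0^R v'\dot v' r^{d_1-1}\,dr+\mu\int_0^R r^{2s+d_1-1}v\dot v\,dr = E_1\int_0^R v\dot v\, r^{d_1-1}\,dr$, while differentiating the normalization gives $\int_0^R v\dot v\, r^{d_1-1}\,dr=0$; the boundary terms in this integration by parts vanish because $\dot v(R)=0$, obtained by differentiating $v(R)=0$ in $\mu$. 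What survives is exactly $\int_0^R r^{2s+d_1-1}v^2\,dr$, which is \eqref{intid3}.

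The identity \eqref{intid2} is of Pohozaev--Rellich type and is the one requiring real care. The plan is to test the equation against the dilation multiplier $r v'$ against the radial weight $r^{d_1-1}$, that is, to multiply by $r^{d_1}v'$ and integrate over $(0,R)$. Integrating by parts term by term, the Laplacian part contributes $-\tfrac12 R^{d_1}(v')^2(R)+\tfrac{d_1}{2}\int_0^R(v')^2 r^{d_1-1}\,dr$ together with the lower-order term $-(d_1-1)\int_0^R(v')^2 r^{d_1-1}\,dr$; the potential part contributes $-\tfrac{\mu(2s+d_1)}{2}\int_0^R r^{2s+d_1-1}v^2\,dr$; and the right-hand side contributes $-\tfrac{E_1 d_1}{2}$ after using the normalization. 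The decisive point is that the boundary term at $r=R$ does \emph{not} vanish: since $v(R)=0$ but $v'(R)\neq 0$, it produces precisely $\tfrac12 R^{d_1}(v')^2(R)$. Collecting all terms and then substituting $E_1=\int_0^R(v')^2 r^{d_1-1}\,dr+\mu\int_0^R r^{2s+d_1-1}v^2\,dr$ from \eqref{intid1} makes the coefficients of $\int(v')^2 r^{d_1-1}$ and of $\mu\int r^{2s+d_1-1}v^2$ combine, leaving exactly \eqref{intid2}. I expect the main obstacle to be the careful bookkeeping of the several integrations by parts and the verification that the only surviving boundary contribution is the one at $r=R$; once this is done, the final cancellation against \eqref{intid1} is purely algebraic.
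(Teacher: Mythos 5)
Your proposal is correct and follows essentially the same route as the paper: identity \eqref{intid1} by testing \eqref{RadScr} against $r^{d_1-1}v$, identity \eqref{intid2} by the Rellich--Pohozaev multiplier $r^{d_1}v'$ combined with \eqref{intid1}, and identity \eqref{intid3} via the Hellmann--Feynman principle (which the paper simply cites, while you supply the standard two-line derivation from the differentiated normalization). The bookkeeping in your \eqref{intid2} computation, including the surviving boundary term $-\tfrac12 R^{d_1}(v')^2(R)$ and the coefficients $\tfrac{d_1}{2}-(d_1-1)$ and $-\tfrac{2s+d_1}{2}$, checks out exactly.
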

\begin{proof}
In order to prove \eqref{intid1} it suffices to multiply the equation \eqref{RadScr} by $r^{d_1-1}v$ 
and integrate by parts. The identity  \eqref{intid2} follows by multiplying 
\eqref{RadScr} by $r^{d_1}v'$, integrating by parts  and  using \eqref{intid1}. Finally,
 equality  \eqref{intid3} follows by classical abstract results in perturbation theory (see, {\it e.g.}, Lamberti and Lanza de Cristoforis \cite[Theorem 2.30]{lamberti-lanza}) or, with the notation of quantum mechanics, 
 by the Hellmann--Feynman Theorem (see Feynman \cite{Fe39}).
  \end{proof}

By using the identities \eqref{intid1}, \eqref{intid2} and \eqref{intid3} of the previous proposition,  we can recover the following differential identity for 
$E_1(\mu,R)$:
\begin{equation}\label{diffid1}
E_1(\mu,R) - \frac{R^{d_1}}{2}(v')^2(R) = \mu(1+s) \dot E_1(\mu, R).
\end{equation}
  We set 
\[
\dot v := \frac{\partial}{\partial \mu}v, \qquad \ddot E_1(\mu,R) := \frac{\partial^2}{\partial \mu^2}E_1(\mu,R).
\]
 Then by taking the derivative  of \eqref{diffid1}  with respect to $\mu$ we get
\begin{equation}\label{diffid2}
-R^{d_1} v'(R)\dot v'(R) = s \dot E_1(\mu,R) + \mu(1+s)\ddot E_1(\mu, R).
\end{equation}
Our aim is to understand the sign of the right hand side of the previous equality, and we have the following.
\begin{prop}\label{prop:pos}
Let $R>0$, $\mu >0$. Then
\begin{equation*}
 s \dot E_1(\mu,R) + \mu(1+s)\ddot E_1(\mu, R) \geq 0.
\end{equation*}
\end{prop}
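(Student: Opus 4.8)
The plan is to start from identity \eqref{diffid2}, which already expresses the quantity of interest as $-R^{d_1}v'(R)\dot v'(R)$, and reduce the Proposition to the single sign claim $\dot v'(R)\ge 0$. Indeed, $v$ is the first (hence non-vanishing, non-negative) eigenfunction, so $v>0$ on $(0,R)$ and $v(R)=0$; uniqueness for the ODE \eqref{RadScr} excludes $v'(R)=0$, and non-negativity forces $v'(R)<0$ (a Hopf-type conclusion). Then $-R^{d_1}v'(R)\dot v'(R)\ge 0$ as soon as $\dot v'(R)\ge 0$, and the statement follows at once.

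To control $\dot v'(R)$, I would differentiate \eqref{RadScr} with respect to $\mu$ (legitimate by the analyticity recorded above). Writing $w:=\dot v$ and $L:=-\partial_r^2-\tfrac{d_1-1}{r}\partial_r+\mu r^{2s}-E_1(\mu,R)$, this gives $Lw=(\dot E_1-r^{2s})v$, together with the side conditions $w(R)=0$ (from $v(R)=0$ for every $\mu$), regularity at $0$, and $\int_0^R v\,w\,r^{d_1-1}\,dr=0$ (from differentiating the normalization $\int_0^R v^2 r^{d_1-1}\,dr=1$).

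The key step is a ground-state substitution $w=v\,h$, which I expect to turn the inhomogeneous equation into the divergence form $(r^{d_1-1}v^2 h')'=(r^{2s}-\dot E_1)v^2 r^{d_1-1}$. Integrating from $0$ and setting $\Psi(r):=\int_0^r(\dot E_1-\rho^{2s})v^2\rho^{d_1-1}\,d\rho$ then yields $r^{d_1-1}v^2 h'(r)=-\Psi(r)$, the boundary term at $0$ vanishing by the regularity of $v$ and $w$. Now $\Psi(0)=0$, while $\Psi(R)=\dot E_1-\int_0^R r^{2s}v^2 r^{d_1-1}\,dr=0$ by \eqref{intid3} and the normalization; moreover $\Psi'=(\dot E_1-r^{2s})v^2 r^{d_1-1}$ changes sign exactly once, from $+$ to $-$, at $r_0=\dot E_1^{1/(2s)}\in(0,R)$ (since $\dot E_1$ is a strict weighted average of $r^{2s}$). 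Hence $\Psi>0$ on $(0,R)$, so $h'<0$ and $h$ is strictly decreasing. Finally, the orthogonality $\int_0^R h\,v^2 r^{d_1-1}\,dr=0$ says $h$ has zero average against the positive measure $v^2 r^{d_1-1}\,dr$; being strictly decreasing, $h$ must pass from positive to negative, so $h(R)=\lim_{r\to R^-}h\le 0$. Since $v(R)=0$ and the term $v\,h'$ vanishes at $R$ (one checks $h'(R)=0$ from the triple zero of $\Psi$ at $R$), I conclude $\dot v'(R)=w'(R)=v'(R)\,h(R)\ge 0$, which closes the reduction.

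The main obstacle is exactly producing the sign of $\dot v'(R)$: mere concavity of $\mu\mapsto E_1(\mu,R)$ is insufficient, since $\ddot E_1\le 0$ and what is needed is the quantitative balance $s\dot E_1\ge \mu(1+s)(-\ddot E_1)$. The ground-state transform is what makes this accessible, reducing everything to the single-hump positivity of $\Psi$ and the zero-average property of $h$. The only delicate points are the vanishing of the boundary contributions at $r=0$ and $r=R$ (handled through the local asymptotics of $v$ and of $\Psi$, including the case $d_1=1$, where radial means even) and the strict sign change of $h$, both of which follow from the explicit formulas above.
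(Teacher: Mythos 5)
Your proposal is correct and follows essentially the same route as the paper: the ground-state substitution $h=\dot v/v$ with $r^{d_1-1}v^2h'=-\Psi$ is exactly the paper's Wronskian identity $r^{d_1-1}(\dot v'v-\dot v v')=\eta$ (with $\eta=-\Psi$), and both arguments deduce the single sign of $\Psi$ from its vanishing endpoints and the one sign change of its derivative, then combine the resulting monotonicity of $\dot v/v$ with the orthogonality $\int_0^R r^{d_1-1}v\dot v\,dr=0$ to get $\dot v'(R)\ge 0$ and conclude via \eqref{diffid2}. Your treatment of the boundary behavior at $r=0$ and $r=R$ is in fact slightly more careful than the paper's.
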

\begin{proof}
 Let  $v$  be the unique non-negative solution of 
problem \eqref{RadScr} normalized in $L^2\big((0,R), r^{d_1-1}dr\big)$.
 We note that, since 
$v$ is positive and $v(R) =0$, then
\[
v'(R) \leq 0, \qquad \dot v(R)=0.
\]
Moreover, by taking the $\mu$-derivative of the normalization condition, we deduce that 
\[
\int_0^R r^{d_1-1}v\dot v\,dr =0
\] 
and accordingly $\dot v$ changes sign at least once. We will show that $\dot v$ changes sign only once in $(0,R)$, and 
 that $\dot v' (R) \geq 0$. 
Differentiating \eqref{RadScr} with respect to $\mu$ we obtain
\begin{equation}\label{RadScr2}
-\dot v''-\frac{d_1-1}{r}\dot v'+\mu r^{2s}\dot v + r^{2s}v = \dot E_1(\mu,R)v+E_1(\mu,R)\dot v
\quad \mbox{  in } (0,R).
\end{equation}
By equations \eqref{RadScr} and \eqref{RadScr2} one gets that
\[
\left(r^{d_1-1}(\dot v'v-\dot v v')\right)'=\left(r^{2s-d_1-1}-\dot E_1(\mu,R)r^{d_1-1}\right)v^2 \quad \mbox{ in } (0,R),
\]
that is 
\[
r^{d_1-1}(\dot v'v-\dot v v') = \int_{0}^r\left(t^{2s}-\dot E_1(\mu,R)\right)t^{d_1-1}v^2(t)\,dt
\quad \mbox{   in } (0,R)
\]
We set
\[
\eta(r) := \int_{0}^r\left(t^{2s}- \dot E_1(\mu,R)\right)t^{d_1-1}v^2(t)\,dt  \quad \forall r \in [0,R].
\]
As one can immediately realize, $\eta(0)=\eta(R)=0$. Moreover, since $t^{d_1-1}v^2(t)$ is non-negative for $t \in (0,R)$ and $(t^{2s}-\dot E_1(\mu,R))$ is strictly increasing for $t \in (0,R)$ and negative at $t=0$, then $\eta$ has exactly one critical point and 
\[
\eta(r) <0 \qquad \forall r \in (0,R). 
\] 
This implies that $\frac{\dot v}{v}$ is strictly decreasing in $(0,R)$ since
\[
\left(\frac{\dot v}{v}\right)'=\frac{\dot v'v-\dot v v'}{v^2} <0 \qquad \mbox{ in } (0,R),
\]
and accordingly $\dot v$ can change sign only once and  $\dot v'(R) \geq 0$. We have then proved that 
\begin{equation*}
- R^{d_1} v'(R)\dot v'(R) \geq 0,
\end{equation*}
and the statement follows by equality \eqref{diffid2}.
\end{proof}

\subsection{Uniqueness of the minimum}
 We are now ready to prove the uniqueness of the minimum of problem \eqref{minpb} by means of the following.
\begin{prop}\label{prop:uniqminF}
The function $F$ defined in \eqref{Fdef} has a unique minimum in $(0,+\infty)$.
\end{prop}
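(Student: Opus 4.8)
The plan is to exploit the differential inequality of Proposition \ref{prop:pos} to show that \emph{every} critical point of $F$ is a strict local minimum; since Proposition \ref{minF} already guarantees that $F$ is smooth on $(0,+\infty)$, tends to $+\infty$ at both endpoints, and hence possesses at least one critical point, this will force the critical point (and thus the minimizer) to be unique. Throughout I would write $\alpha := \frac{d_2}{d_1+(1+s)d_2}$, so that $F(\sigma) = \sigma^{-\alpha}E_1(\sigma,B_1)$, and recall that $B_1 = B(0,R_1)$ with $R_1$ the radius of the unit-volume ball in $\mathbb{R}^{d_1}$, so that $\dot E_1(\sigma,B_1)$, $\ddot E_1(\sigma,B_1)$, and the inequality of Proposition \ref{prop:pos} are all available with $R=R_1$ and $\mu=\sigma$.

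First I would differentiate, obtaining $F'(\sigma) = \sigma^{-\alpha-1}\big(\sigma\dot E_1(\sigma,B_1) - \alpha E_1(\sigma,B_1)\big)$, so that $\sigma_0 \in (0,+\infty)$ is a critical point of $F$ if and only if $\sigma_0 \dot E_1(\sigma_0,B_1) = \alpha E_1(\sigma_0,B_1)$. Differentiating once more and evaluating at such a critical point, where the factor $\sigma\dot E_1 - \alpha E_1$ vanishes, every term coming from its derivative drops and one is left with
\begin{equation*}
F''(\sigma_0) = \sigma_0^{-\alpha-1}\Big((1-\alpha)\dot E_1(\sigma_0,B_1) + \sigma_0\ddot E_1(\sigma_0,B_1)\Big).
\end{equation*}
The sign of $F''(\sigma_0)$ is thus governed entirely by the bracketed expression.

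The key step is to show this bracket is strictly positive. By Proposition \ref{prop:pos} applied with $\mu=\sigma_0$ and $R=R_1$ we have $\sigma_0(1+s)\ddot E_1(\sigma_0,B_1) \geq -s\,\dot E_1(\sigma_0,B_1)$, that is $\sigma_0\ddot E_1(\sigma_0,B_1) \geq -\frac{s}{1+s}\dot E_1(\sigma_0,B_1)$. Inserting this bound gives
\begin{equation*}
(1-\alpha)\dot E_1(\sigma_0,B_1) + \sigma_0\ddot E_1(\sigma_0,B_1) \geq \Big(1 - \alpha - \tfrac{s}{1+s}\Big)\dot E_1(\sigma_0,B_1),
\end{equation*}
and a direct computation with the explicit value of $\alpha$ yields the crucial identity
\begin{equation*}
1 - \alpha - \frac{s}{1+s} = \frac{1}{1+s} - \alpha = \frac{d_1}{(1+s)\big(d_1+(1+s)d_2\big)} > 0.
\end{equation*}
Since $\dot E_1(\sigma_0,B_1) = \int_0^{R_1} r^{2s+d_1-1}v^2\,dr > 0$ by identity \eqref{intid3} and $\sigma_0^{-\alpha-1} > 0$, I conclude $F''(\sigma_0) > 0$, so every critical point of $F$ is a strict local minimum.

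It remains to deduce uniqueness. Because $F$ is analytic and non-constant, $F'$ has isolated zeros, and at each such zero $F'$ is strictly increasing (as $F''>0$ there), hence passes from negative to positive values. If $F'$ had two zeros $\sigma_1 < \sigma_2$, then $F'$ would be positive just to the right of $\sigma_1$ and negative just to the left of $\sigma_2$, forcing an intermediate zero at which $F'$ passes from positive to negative, contradicting that $F''>0$ at every zero. Thus $F'$ vanishes at most once, and combined with the existence of a minimizer from Proposition \ref{minF}, $F$ has exactly one critical point, which is its unique minimum. The main obstacle is precisely the second-derivative computation above: the inequality of Proposition \ref{prop:pos} is tight, so positivity of $F''$ at critical points survives only because the scaling exponent $\alpha$ is exactly the value for which $\frac{1}{1+s}-\alpha>0$; any weaker control on $\ddot E_1$ would not suffice.
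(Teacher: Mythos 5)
Your proposal is correct and follows essentially the same route as the paper: differentiate $F$, observe that at any critical point the second derivative reduces to $\sigma_0^{-\alpha-1}\big((1-\alpha)\dot E_1+\sigma_0\ddot E_1\big)$, and combine Proposition \ref{prop:pos} with the positivity of $\dot E_1$ from \eqref{intid3} and the algebraic identity $1-\alpha-\tfrac{s}{1+s}=\tfrac{d_1}{(1+s)(d_1+(1+s)d_2)}>0$ to conclude $F''(\sigma_0)>0$, hence uniqueness of the critical point. Your final paragraph merely spells out in more detail the standard argument the paper compresses into one sentence (smooth function whose critical points are all strict local minima has at most one critical point), so there is no substantive difference.
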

\begin{proof}
We take the derivative of $F$. Let $\sigma >0$, then
\begin{align*}
F'(\sigma)=  \sigma^{-\frac{d_2}{d_1+(1+s)d_2}-1}
    \Bigg(-\frac{d_2}{d_1+(1+s)d_2} E_{1}\left(\sigma,B_1\right) 
+\sigma \dot E_{1}\left(\sigma,B_1\right)\Bigg).
\end{align*}
By Proposition \ref{minF}, $F'$ has at least a zero, {\it i.e.} a critical point of $F$.  Accordingly, let $\sigma^*>0$ be a critical point of $F$. Computing the second derivative of $F$ in $\sigma^*$ one gets
\begin{align*}
F''(&\sigma^*)\\
=&\,\,  (\sigma^*)^{-\frac{d_2}{d_1+(1+s)d_2}-1}\\
& \qquad\Bigg(-\frac{d_2}{d_1+(1+s)d_2} \dot E_{1}\left(\sigma^*,B_1\right) 
+ \dot E_{1}\left(\sigma^*,B_1\right)+\sigma^* \ddot E_{1}\left(\sigma^*,B_1\right)\Bigg)\\
=&\,\,  (\sigma^*)^{-\frac{d_2}{d_1+(1+s)d_2}-1}\\
& \quad\Bigg(\frac{d_1}{(d_1+(1+s)d_2)(1+s)} \dot E_{1}\left(\sigma^*,B_1\right) 
+ \frac{s}{1+s}\dot E_{1}\left(\sigma^*,B_1\right)+\sigma^* \ddot E_{1}\left(\sigma^*,B_1\right)\Bigg).
\end{align*}
By identity \eqref{intid3} on the $\mu$-derivative of  $E_1(\mu,R)$, we have that
\[
\dot E_1\left( \sigma^*,B_1\right) > 0.
\]
Moreover, by Proposition \ref{prop:pos},
\begin{align*}
\frac{s}{1+s}\dot E_{1}\left(\sigma^*,B_1\right)+\sigma^* \ddot E_{1}\left(\sigma^*,B_1\right) \geq 0.
\end{align*}
Thus
\[
F''(\sigma^*) > 0.
\]
Since $F$ is smooth in $(0,+\infty)$ and all its critical points have positive second derivative, then $F$ has only one critical point which is a minimum.
\end{proof}
By Proposition \ref{prop:min} and Proposition \ref{prop:uniqminF} we can immediately deduce that the first Grushin eigenvalue admits a unique 
minimum.
\begin{thm}\label{thm:uniqmin}
Let $V>0$. There exists a unique set $\Omega^* =\Omega^*_1 \times \Omega^*_2$ in  $\mathcal{A}(V)$ (up to translations in $\mathbb{R}^{d_2}$) such that 
\[
\lambda_1(\Omega^*) \leq \lambda_1(\Omega) \qquad \forall\, \Omega \in \mathcal{A}(V).
\]
Moreover, $\Omega^*_1 \subseteq \mathbb{R}^{d_1}$, $\Omega^*_2\subseteq \mathbb{R}^{d_2}$ are two balls, the first one being centered in zero.  
\end{thm}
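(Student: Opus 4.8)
The plan is to obtain the theorem as a direct consequence of Propositions \ref{prop:min} and \ref{prop:uniqminF}, by carefully tracking the reduction of the shape optimization to a one-dimensional scalar minimization. First I would recall the chain of inequalities established in the discussion preceding Proposition \ref{minF}: for every $\Omega = \Omega_1 \times \Omega_2 \in \mathcal{A}(V)$ one has
\[
\lambda_1(\Omega) \geq |\Omega_1|^{-\frac{2}{d_1}} E_1\!\left(\mu_1(B_2) V^{-\frac{2}{d_2}} |\Omega_1|^{\frac{2}{d_1}+\frac{2}{d_2}+\frac{2s}{d_1}}, B_1\right),
\]
with equality if and only if $\Omega_1$ and $\Omega_2$ are balls, the former centered at the origin. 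This identifies $\inf_{\Omega\in\mathcal{A}(V)} \lambda_1(\Omega)$ with the infimum over $t = |\Omega_1| \in (0,+\infty)$ of the function in \eqref{minfunt}.

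Next I would invoke the substitution \eqref{subsig}. Since $s>0$, the exponent $\frac{2}{d_1}+\frac{2}{d_2}+\frac{2s}{d_1}$ is positive, so $t \mapsto \sigma(t)$ is a strictly increasing smooth bijection of $(0,+\infty)$ onto itself; moreover a short computation shows that \eqref{minfunt} equals $F(\sigma(t))$ as in \eqref{Fdef}, up to a positive multiplicative constant depending only on $V, s, d_1, d_2$. Hence minimizing \eqref{minfunt} in $t$ is equivalent to minimizing $F$ in $\sigma$, and the minimizers correspond to one another under the bijection $\sigma(\cdot)$.

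Existence of a minimizer then follows from Proposition \ref{prop:min} (equivalently from Proposition \ref{minF}). For uniqueness, Proposition \ref{prop:uniqminF} guarantees a unique minimizing $\sigma^* \in (0,+\infty)$ for $F$, which pulls back through \eqref{subsig} to a unique optimal volume $t^* = |\Omega_1^*|$; the constraint $|\Omega_1||\Omega_2| = V$ then fixes $|\Omega_2^*| = V/t^*$. Because equality in the lower bound forces $\Omega_1^*$ and $\Omega_2^*$ to be balls, with $\Omega_1^*$ centered at the origin, the optimal domain is completely determined except for the position of $\Omega_2^*$; this residual freedom is precisely the translation invariance of $\mu_1(\Omega_2)$ for the Dirichlet Laplacian, which accounts for the phrase ``up to translations in $\mathbb{R}^{d_2}$'' in the statement.

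The step requiring the most care is the passage from uniqueness of the scalar minimizer to uniqueness of the domain: one must verify that equality throughout the lower-bound chain genuinely characterizes the optimizer as a product of balls (via the Faber--Krahn inequality for $\mu_1(\Omega_2)$ and the Benguria--Linde--Loewe result for the Schr\"odinger ground state in $\Omega_1$), and that no degeneracy beyond the $\mathbb{R}^{d_2}$-translation of $\Omega_2^*$ is introduced. The genuinely hard analytic input---the convexity-type information ensuring $F$ has a single critical point---is already encapsulated in Proposition \ref{prop:pos} and its use in Proposition \ref{prop:uniqminF}, so no new estimates are needed at this stage.
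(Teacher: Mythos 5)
Your proposal is correct and follows essentially the same route as the paper: the authors obtain Theorem \ref{thm:uniqmin} exactly by combining Proposition \ref{prop:min} (existence, with the equality cases in the lower-bound chain forcing a product of balls) and Proposition \ref{prop:uniqminF} (uniqueness of the minimizer of $F$), with the substitution \eqref{subsig} providing the bijective correspondence between optimal volumes $t=|\Omega_1|$ and optimal coupling constants $\sigma$. Your write-up merely makes explicit the details the paper compresses into ``we can immediately deduce,'' including the harmless positive multiplicative constant relating \eqref{minfunt} to $F(\sigma(t))$ and the residual translation invariance of $\Omega_2^*$.
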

\subsection{Localizing the minimum}\label{sec:locmin}
In this section we obtain some information on the localization of the minimum we proved to be unique in the last section. Let $B_1$ be the ball in $\mathbb{R}^{d_1}$ centered in zero with $|B_1|=1$ and $B_2$ a  ball in 
$\mathbb{R}^{d_2}$ with $|B_2|=1$. Moreover, we denote by
$\tau_d$ the volume of the $d-$dimensional unit ball, that is
\[
\tau_d := \frac{\pi^\frac{d}{2}}{\Gamma(1+\frac{d}{2})} {\color{red}.}
\]
 Let $\sigma >0$ and let $ f_\sigma$ be an eigenfunction corresponding to 
 $E_{1}\left( \sigma,B_1\right)$ normalized in $L^2(B_1)$. We write more explicitly  $F'(\sigma)$. Since
\begin{align*}
\frac{d}{d\sigma}  E_{1}\left( \sigma,B_1\right) 
=  \int_{B_1}|x_1|^{2s}f_\sigma^2\,dx_1,
\end{align*}
then
\begin{align*}
&F'(\sigma) \\
&= -  \Bigg(\frac{d_2}{d_1+(1+s)d_2}\sigma^{-\frac{d_2}{d_1+(1+s)d_2}-1}
E_{1}\left(\sigma,B_1\right)
-  \sigma^{-\frac{d_2}{d_1+(1+s)d_2}}\int_{B_1}|x_1|^{2s}f_\sigma^2\,dx_1\Bigg)\\
&=- \Bigg(\frac{d_2}{d_1+(1+s)d_2}\sigma^{-\frac{d_2}{d_1+(1+s)d_2}-1}
\Big(\int_{B_1}|\nabla_{x_1}f_\sigma|^2\,dx_1+\sigma\int_{B_1}|x_1|^{2s}f^2_\sigma \,dx_1\Big)\\
&\hspace{2cm}-  \sigma^{-\frac{d_2}{d_1+(1+s)d_2}}\int_{B_1}|x_1|^{2s}f_\sigma^2\,dx_1\Bigg)\\
&=- \sigma^{-\frac{d_2}{d_1+(1+s)d_2}-1}\frac{d_2}{d_1+(1+s)d_2}
 \Bigg(
 \int_{B_1}|\nabla_{x_1}f_\sigma|^2\,dx_1-\frac{d_1+sd_2}{d_2}\sigma\int_{B_1}|x_1|^{2s}f^2_\sigma \,dx_1\Bigg).
\end{align*}
%
Let $\sigma^*>0$ be the unique minimum point of $F$, then $F'(\sigma^*) = 0$. That is
\begin{equation}\label{eq:crcond}
\int_{B_1}|\nabla_{x_1}f_{\sigma^*}|^2\,dx_1-\frac{d_1+sd_2}{d_2}\sigma^*\int_{B_1}|x_1|^{2s}f^2_{\sigma^*} \,dx_1=0.
\end{equation}
We note that
\[
\int_{B_1}|\nabla_{x_1}  f_{\sigma*}|^2\,dx_1 \geq \mu_1(B_1) \|f_{\sigma*}\|^2_{L^2(B_1)} =  \mu_1(B_1),
\]
being $\mu_1(B_1)$ be the first Dirichlet Laplacian eigenvalue on the ball $B_1 \subseteq \mathbb{R}^{d_1}$, and
\[
 \int_{B_1}|x_1|^{2s} f_{\sigma^*}^2\,dx_1  \leq \tau_{d_1}^{-\frac{2s}{d_1}} \|f_{\sigma^*}\|^2_{L^2(B_1)} =  \tau_{d_1}^{-\frac{2s}{d_1}},
\]
where we have used the fact that $|x_1| \leq \tau_{d_1}^{-\frac{1}{d_1}}$ for all $x_1 \in B_1$.
Thus
\begin{align*}
\int_{B_1}|\nabla_{x_1}f_{\sigma*}|^2\,dx_1&-\frac{d_1+sd_2}{d_2}\sigma^*\int_{B_1}|x_1|^{2s}f^2_{\sigma^*} \,dx_1 \geq  \mu_1(B_1) -\frac{d_1+sd_2}{d_2}\tau_{d_1}^{-\frac{2s}{d_1}}\sigma^*.
\end{align*}
Since for all $\sigma \in (0,+\infty)$ such that 
\[
\sigma <   \tau_{d_1}^{\frac{2s}{d_1}} \frac{d_2}{d_1+sd_2}\mu_1(B_1) 
\]
one has
\[
\mu_1(B_1) -\frac{d_1+sd_2}{d_2}\tau_{d_1}^{-\frac{2s}{d_1}}\sigma>0,
\]
therefore  the unique minimum point of $F$ must satisfy
\[
\sigma^* \geq \tau_{d_1}^{\frac{2s}{d_1}} \frac{d_2}{d_1+sd_2}\mu_1(B_1).
\]
In other words, recalling the substitution \eqref{subsig}, we have proved the following lower bound.
\begin{prop}\label{prop:lb}
Let $V>0$. Let $\Omega^*_1 \subseteq \mathbb{R}^{d_1}$, $\Omega^*_2 \subseteq \mathbb{R}^{d_1}$ be the two 
balls given by Theorem \ref{thm:uniqmin}. Then
\begin{equation}\label{prop:lb1}
|\Omega^*_1| \geq \left(  \tau_{d_1}^{\frac{2s}{d_1}} \frac{d_2}{d_1+sd_2}\frac{\mu_1(B_1)}{\mu_1(B_2)}V^{\frac{2}{d_2}} \right)^{\frac{d_1 d_2}{2(d_1+(1+s)d_2)}}.
\end{equation}
\end{prop}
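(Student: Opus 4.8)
The plan is to extract a lower bound for the minimizer $\sigma^*$ of the function $F$ from \eqref{Fdef} by combining the vanishing of $F'$ at $\sigma^*$ with two sharp one-sided estimates for the integrals entering the Rayleigh quotient, and then to convert this into a bound on $|\Omega_1^*|$ by inverting the substitution \eqref{subsig}.

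First I would make the critical point condition explicit. Let $f_\sigma$ denote an $L^2(B_1)$-normalized eigenfunction for $E_1(\sigma,B_1)$. By the Hellmann--Feynman identity \eqref{intid3} one has $\frac{d}{d\sigma}E_1(\sigma,B_1)=\int_{B_1}|x_1|^{2s}f_\sigma^2\,dx_1$, while $E_1(\sigma,B_1)=\int_{B_1}|\nabla_{x_1}f_\sigma|^2\,dx_1+\sigma\int_{B_1}|x_1|^{2s}f_\sigma^2\,dx_1$. Inserting these into $F'(\sigma)$ and factoring out the (nonzero) prefactor, the condition $F'(\sigma^*)=0$ at the unique minimizer provided by Proposition \ref{prop:uniqminF} becomes the balance identity \eqref{eq:crcond}, which I rewrite as
\[
\int_{B_1}|\nabla_{x_1}f_{\sigma^*}|^2\,dx_1=\frac{d_1+sd_2}{d_2}\,\sigma^*\int_{B_1}|x_1|^{2s}f_{\sigma^*}^2\,dx_1.
\]

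Next I would estimate the two sides separately, in opposite directions. For the left-hand side, the variational characterization of the first Dirichlet Laplacian eigenvalue on $B_1$ together with $\|f_{\sigma^*}\|_{L^2(B_1)}=1$ gives $\int_{B_1}|\nabla_{x_1}f_{\sigma^*}|^2\,dx_1\geq\mu_1(B_1)$. For the right-hand side, I would use that $B_1$ has unit volume, hence radius $\tau_{d_1}^{-1/d_1}$, so that $|x_1|\leq\tau_{d_1}^{-1/d_1}$ on $B_1$ and $\int_{B_1}|x_1|^{2s}f_{\sigma^*}^2\,dx_1\leq\tau_{d_1}^{-2s/d_1}$. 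Chaining these through the balance identity forces $\mu_1(B_1)\leq\frac{d_1+sd_2}{d_2}\tau_{d_1}^{-2s/d_1}\sigma^*$, which rearranges to
\[
\sigma^*\geq\tau_{d_1}^{\frac{2s}{d_1}}\frac{d_2}{d_1+sd_2}\mu_1(B_1).
\]

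Finally, I would unwind the substitution \eqref{subsig}. With $t=|\Omega_1^*|$ it reads $\sigma^*=\mu_1(B_2)V^{-2/d_2}|\Omega_1^*|^{\frac{2}{d_1}+\frac{2}{d_2}+\frac{2s}{d_1}}$, and since the exponent equals $\frac{2(d_1+(1+s)d_2)}{d_1d_2}>0$ this is a strictly increasing function of $|\Omega_1^*|$. Solving for $|\Omega_1^*|$ and inserting the lower bound on $\sigma^*$ then yields exactly \eqref{prop:lb1}. The only nontrivial ingredient is the balance identity \eqref{eq:crcond}, which rests on the Hellmann--Feynman formula and the explicit form of $F'$; once it is available, the two one-sided estimates are elementary and the inversion of \eqref{subsig} is purely algebraic, so I do not anticipate a genuine obstacle.
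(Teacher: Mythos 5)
Your proposal is correct and follows essentially the same route as the paper's own argument: the explicit form of $F'$ via the Hellmann--Feynman identity \eqref{intid3}, the critical-point balance \eqref{eq:crcond} at $\sigma^*$, the two one-sided bounds $\int_{B_1}|\nabla_{x_1}f_{\sigma^*}|^2\,dx_1\geq\mu_1(B_1)$ and $\int_{B_1}|x_1|^{2s}f_{\sigma^*}^2\,dx_1\leq\tau_{d_1}^{-2s/d_1}$, and the inversion of the substitution \eqref{subsig}. There is nothing to correct.
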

By the lower bound \eqref{prop:lb1} of the previous proposition it is also possible to provide a lower bound on $\min_{\Omega \in \mathcal{A}(V)}\lambda_1(\Omega)$.
\begin{prop}\label{prop:lbe}
Let $V>0$. Let $\Omega^*_1 \subseteq \mathbb{R}^{d_1}$, $\Omega^*_2 \subseteq \mathbb{R}^{d_1}$ be the two 
balls given by Theorem \ref{thm:uniqmin}. Then
\begin{align}\label{prop:lbe1}
\lambda_1(&\Omega_1^*\times \Omega_2^*)\\ \nonumber
&\geq  \mu_1(B_2)^{\frac{1}{s+1}}V^{-\frac{2}{d_1+(1+s)d_2}}E_1(1,\mathbb R^{d_1}) \left(\tau_{d_1}^{\frac{2s}{d_1}}\frac{d_2}{d_1+sd_2}\frac{\mu_1(B_1)}{\mu_1(B_2)}\right)^{\frac{d_1}{(s+1)(d_1+(1+s)d_2)}}.
\end{align}
\end{prop}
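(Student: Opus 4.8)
The plan is to reduce the stated bound to an estimate for $F$ at its minimizer and then combine two facts already established in the excerpt: the whole-space comparison from Proposition \ref{minF} and the lower bound on the optimal coupling constant obtained in the derivation of Proposition \ref{prop:lb}. First I would record that, at the product of balls $\Omega_1^*\times\Omega_2^*$, the chain of inequalities in the existence section holds with equality (Faber--Krahn is sharp on the ball $\Omega_2^*$, and Benguria--Linde--Loewe is sharp on the centered ball $\Omega_1^*$), so that $\lambda_1(\Omega_1^*\times\Omega_2^*)=|\Omega_1^*|^{-2/d_1}E_1(\sigma^*,B_1)$, where $\sigma^*$ is the unique minimum point of $F$ supplied by Proposition \ref{prop:uniqminF}. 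Writing $a=\frac{2}{d_1}+\frac{2}{d_2}+\frac{2s}{d_1}$ in the substitution \eqref{subsig} and using $\frac{2}{a\,d_1}=\frac{d_2}{d_1+(1+s)d_2}$, this prefactor rewrites as
\begin{equation*}
\lambda_1(\Omega_1^*\times\Omega_2^*) = V^{-\frac{2}{d_1+(1+s)d_2}}\,\mu_1(B_2)^{\frac{d_2}{d_1+(1+s)d_2}}\,F(\sigma^*).
\end{equation*}
This first step is essentially bookkeeping and should be routine.

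Next I would bound $F(\sigma^*)$ from below. Reusing inequality \eqref{minF:ineq} from the proof of Proposition \ref{minF}, namely $E_1(\sigma,B_1)\geq \sigma^{\frac{1}{1+s}}E_1(1,\mathbb{R}^{d_1})$, together with the identity $-\frac{d_2}{d_1+(1+s)d_2}+\frac{1}{1+s}=\frac{d_1}{(1+s)(d_1+(1+s)d_2)}>0$ already noted in the excerpt, I obtain
\begin{equation*}
F(\sigma)\geq \sigma^{\frac{d_1}{(1+s)(d_1+(1+s)d_2)}}\,E_1(1,\mathbb{R}^{d_1})\qquad\forall\,\sigma>0.
\end{equation*}

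Since the exponent here is strictly positive, this estimate is monotone increasing in $\sigma$, so I can insert the lower bound for $\sigma^*$ established just before Proposition \ref{prop:lb}, that is $\sigma^*\geq \tau_{d_1}^{2s/d_1}\frac{d_2}{d_1+sd_2}\mu_1(B_1)$, to conclude
\begin{equation*}
F(\sigma^*)\geq \Big(\tau_{d_1}^{2s/d_1}\frac{d_2}{d_1+sd_2}\mu_1(B_1)\Big)^{\frac{d_1}{(1+s)(d_1+(1+s)d_2)}}\,E_1(1,\mathbb{R}^{d_1}).
\end{equation*}
Substituting this into the identity of the first step yields an explicit lower bound for $\lambda_1(\Omega_1^*\times\Omega_2^*)$.

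Finally I would rearrange the constants into the shape of \eqref{prop:lbe1}. The only point that needs care is the algebra on the powers of $\mu_1(B_2)$: the decomposition $\frac{d_2}{d_1+(1+s)d_2}=\frac{1}{1+s}-\frac{d_1}{(1+s)(d_1+(1+s)d_2)}$ splits $\mu_1(B_2)^{d_2/(d_1+(1+s)d_2)}$ into the outer factor $\mu_1(B_2)^{1/(1+s)}$ and a remaining factor $\mu_1(B_2)^{-d_1/((1+s)(d_1+(1+s)d_2))}$, which merges with $\mu_1(B_1)$ inside the bracket to produce the ratio $\mu_1(B_1)/\mu_1(B_2)$ appearing in \eqref{prop:lbe1}. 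This exponent bookkeeping is the main (and essentially the only) obstacle; it is mechanical rather than conceptual, and once carried out the resulting inequality is exactly \eqref{prop:lbe1}.
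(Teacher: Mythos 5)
Your proposal is correct and follows essentially the same route as the paper: the paper's (terse) proof likewise combines the inequality \eqref{minF:ineq}, the substitutions \eqref{minfunt}, \eqref{subsig}, \eqref{Fdef}, and the lower bound on $\sigma^*$ underlying \eqref{prop:lb1}, exactly as you do. Your write-up simply makes explicit the bookkeeping (the identity $\lambda_1(\Omega_1^*\times\Omega_2^*)=V^{-\frac{2}{d_1+(1+s)d_2}}\mu_1(B_2)^{\frac{d_2}{d_1+(1+s)d_2}}F(\sigma^*)$ and the exponent splitting for $\mu_1(B_2)$) that the paper leaves to the reader.
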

\begin{proof}
Let $\sigma^*$ be the unique minimizer of $F$. By the  inequality (see \eqref{minF:ineq}):

 \[
 E_1(\sigma^*,B_1) \geq (\sigma^*)^{\frac{1}{s+1}}E_1(1,\mathbb R^{d_1}) \qquad \forall \sigma>0.
 \]
 Then, recalling the substitutions made in \eqref{minfunt}, \eqref{subsig}, \eqref{Fdef} and 
using the lower bound \eqref{prop:lb1}, we have that  the lower bound \eqref{prop:lbe1} holds and the statement is proved.
\end{proof}


 \subsection{Limits as $s \to 0^+$ and $s \to +\infty$}
In this section we study the behavior of the minimization problem when the parameter $s$ tends either to $0$ or to $+\infty$. We use the notation introduced in Section \ref{sec:minprod}.  In particular, 
 $B_1$ denotes the ball in $\mathbb{R}^{d_1}$ centered in zero with $|B_1|=1$ and $B_2$ a  ball in 
$\mathbb{R}^{d_2}$ with $|B_2|=1$.
Let $V>0$.
It will be convenient to introduce a new variable $\tilde\sigma$, defined as follows.
\[
\tilde \sigma = \tilde\sigma(t) =\mu_1(B_2)t^{\frac{2}{d_1}+\frac{2}{d_2}}V^{-\frac{2}{d_2}} \qquad \forall t>0.
\]
Then, if $\sigma$ is the variable defined in \eqref{subsig}, we have
\[
\sigma(t)=\tilde \sigma(t) t^{\frac{2s}{d_1}} \qquad \forall t>0.
\]
The introduction of the variable $\tilde\sigma$ is motivated by the fact that, in this section, we will need to keep the dependence of the coupling constant $\sigma$ explicit on $s$, since we are studying the behavior of the first eigenvalue when $s$  tends to $0$ or to $+\infty$. 
We then set
\begin{equation}\label{Gs}
G_s(t):= t^{-\frac{2}{d_1}}E_1\left(\tilde\sigma(t) t^{\frac{2s}{d_1}},B_1\right) \qquad \forall t>0
\end{equation}
where we recall that for $\mu >0$, $E_1(\mu,\Omega_1)$  denotes the first eigenvalue of problem \eqref{eigsplit2} set in 
$\Omega_1 \subseteq \mathbb{R}^{d_1}$.
We recall that, as noted in Section \ref{sec:minprod} (see in particular \eqref{minfunt}), the unique minimal point of $G_s$ represents the volume of the 
ball in $\mathbb{R}^{d_1}$ of the minimal set. 
We now start to  consider here the limit as $s\rightarrow 0^+$.
\begin{prop}
Let $G_s$ be the function defined in $\eqref{Gs}$. Let
\[
G_0(t):=t^{-\frac{2}{d_1}}(\mu_1(B_1)+\tilde\sigma(t))=t^{-\frac{2}{d_1}}\mu_1(B_1)+t^{\frac{2}{d_2}}V^{-\frac{2}{d_2}}\mu_1(B_2) \quad \forall t >0.
\]
Then
\[
\lim_{s\to0^+}G_s(t) = G_0(t) \qquad \forall t>0.
\]
\end{prop}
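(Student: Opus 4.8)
The plan is to fix $t>0$, observe that the prefactor $t^{-2/d_1}$ is a positive constant independent of $s$, and thereby reduce the statement to
\[
\lim_{s\to 0^+} E_1\big(\sigma_s, B_1\big) = \mu_1(B_1) + \tilde\sigma(t), \qquad \sigma_s := \tilde\sigma(t)\, t^{\frac{2s}{d_1}}.
\]
Since $\tilde\sigma(t)$ is independent of $s$ and $t^{2s/d_1}\to 1$ as $s\to 0^+$, the coupling constant satisfies $\sigma_s\to\tilde\sigma(t)$. The heuristic driving the proof is that the potential $|x_1|^{2s}$ tends to the constant $1$ (pointwise a.e.\ on $B_1$) as $s\to 0^+$, so that $-\Delta_{x_1}+\sigma_s|x_1|^{2s}$ formally converges to $-\Delta_{x_1}+\tilde\sigma(t)$, whose first Dirichlet eigenvalue on $B_1$ is exactly $\mu_1(B_1)+\tilde\sigma(t)$. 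I would make this rigorous by bounding the Rayleigh quotient from above and from below.

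For the upper bound, I would insert the first normalized Dirichlet eigenfunction $\phi$ of $-\Delta_{x_1}$ on $B_1$ as a test function in the variational characterization of $E_1(\sigma_s,B_1)$, obtaining
\[
E_1(\sigma_s,B_1) \le \mu_1(B_1) + \sigma_s \int_{B_1} |x_1|^{2s}\phi^2\,dx_1.
\]
Because $|x_1|^{2s}$ is uniformly bounded on $B_1$ for $s$ near $0$ and converges to $1$ pointwise a.e., dominated convergence yields $\int_{B_1}|x_1|^{2s}\phi^2\,dx_1\to 1$, hence $\limsup_{s\to 0^+}E_1(\sigma_s,B_1)\le \mu_1(B_1)+\tilde\sigma(t)$. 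As a byproduct, this bound shows that $E_1(\sigma_s,B_1)$, and therefore $\int_{B_1}|\nabla_{x_1}f_s|^2\,dx_1$, remains bounded uniformly in $s$ near $0$.

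For the lower bound, let $f_s$ be the normalized first eigenfunction for $\sigma_s$. Using $\int_{B_1}|\nabla_{x_1}f_s|^2\,dx_1\ge \mu_1(B_1)$ together with $\int_{B_1}f_s^2\,dx_1=1$, I would rewrite the eigenvalue identity as
\[
E_1(\sigma_s,B_1) \ge \mu_1(B_1)+\sigma_s + \sigma_s\int_{B_1}\big(|x_1|^{2s}-1\big)f_s^2\,dx_1,
\]
so it remains to show the last integral tends to $0$. The hard part is precisely here: although $|x_1|^{2s}-1\to 0$ in $L^{q}(B_1)$ for every finite $q$, the convergence is not uniform because $|x_1|^{2s}\to 0$ as $x_1\to 0$, and a priori the eigenfunctions $f_s$ might concentrate near the origin. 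To exclude this I would exploit the uniform $H^1_0(B_1)$ bound on $\{f_s\}$ coming from the upper bound, which via the Sobolev embedding gives a uniform bound on $\|f_s\|_{L^{p}(B_1)}$ for some $p>2$. A Hölder estimate then yields
\[
\left| \int_{B_1}\big(|x_1|^{2s}-1\big)f_s^2\,dx_1 \right| \le \big\||x_1|^{2s}-1\big\|_{L^{q}(B_1)}\,\|f_s\|_{L^{2q'}(B_1)}^2,
\]
with $q$ chosen large enough (so that $2q'\le p$); the first factor vanishes as $s\to 0^+$ while the second stays bounded, so the product tends to $0$. Combining this with $\sigma_s\to\tilde\sigma(t)$ gives $\liminf_{s\to 0^+}E_1(\sigma_s,B_1)\ge \mu_1(B_1)+\tilde\sigma(t)$, and together with the upper bound this proves $\lim_{s\to 0^+}G_s(t)=G_0(t)$.
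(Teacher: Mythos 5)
Your proof is correct, and while your upper bound is essentially the paper's (they test with an arbitrary normalized $f$ and bound the potential by its supremum $\tau_{d_1}^{-2s/d_1}$; you test with the fixed Laplacian eigenfunction and invoke dominated convergence), your lower bound takes a genuinely different route. The paper handles the possible concentration of eigenfunctions at the origin \emph{geometrically}: it excises a small ball $B(0,\varepsilon)$, bounds $E_1$ below by $\bigl(\tilde\mu_1(\varepsilon)+\sigma\varepsilon^{2s}\bigr)\int_{B_1\setminus B(0,\varepsilon)}f_\sigma^2$, where $\tilde\mu_1(\varepsilon)$ is a mixed Dirichlet--Neumann eigenvalue of the punctured ball, controls $\int_{B(0,\varepsilon)}f_\sigma^2$ by dimension-dependent Sobolev inequalities (subcritical $d_1=1$, critical $d_1=2$, supercritical $d_1>2$), and finally uses the small-hole spectral stability result $\tilde\mu_1(\varepsilon)\to\mu_1(B_1)$ cited from Lanza de Cristoforis, sending first $s\to 0^+$ and then $\varepsilon\to 0^+$. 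You instead decompose the \emph{potential}, writing $|x_1|^{2s}=1+(|x_1|^{2s}-1)$, get the main term $\mu_1(B_1)+\sigma_s$ for free from the Rayleigh quotient and the normalization, and kill the remainder by H\"older, pairing $\||x_1|^{2s}-1\|_{L^q(B_1)}\to 0$ against a uniform $L^{2q'}$ bound on the eigenfunctions coming from the uniform $H^1_0$ bound plus Sobolev embedding (choosing $q\geq d_1/2$ when $d_1\geq 3$, any finite $q$ otherwise). Your route is more self-contained — it needs no domain-perturbation input and replaces the three-case analysis near the origin by a single H\"older estimate — whereas the paper's intermediate inequality is quantitative in $(\varepsilon,s,\sigma)$ and valid for all $s$, not only asymptotically, which is extra information the limit statement does not require. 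Both arguments correctly identify and resolve the only real difficulty, namely that $|x_1|^{2s}\not\to 1$ uniformly because the potential vanishes at the origin for every $s>0$.
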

\begin{proof}
We start by noting that for all $f\in H^1_0(B_1)$, $\|f\|_{L^2(B_1)}=1$, and $\sigma \in (0,+\infty)$:
\begin{equation}\label{up_s_0}
\int_{B_1}|\nabla_{x_1} f|^2\,dx_1+\sigma\int_{B_1}|x_1|^{2s}f^2\,dx_1\leq \int_{B_1}|\nabla_{x_1} f|^2\,dx_1+\sigma \tau_{d_1}^{-\frac{2s}{d_1}}.
\end{equation}
 Then, taking the minimum over $f\in H^1_0(B_1)$ with $\|f\|_{L^2(B_1)}=1$ to both sides of \eqref{up_s_0}, we immediately obtain
\begin{equation}\label{up_s_1}
E_1(\sigma,B_1)\leq\mu_1(B_1)+\sigma \tau_{d_1}^{-\frac{2s}{d_1}}.
\end{equation}
On the other hand, if $f_{\sigma}$ is the unique (up to sign) eigenfunction associated with $E_1(\sigma,B_1)$ satisfying $\|f_{\sigma}\|_{L^2(B_1)}=1$, then if $B(0,\varepsilon)$ is the ball in $\mathbb{R}^{d_1}$ centered at zero and of radius $\varepsilon\in(0,\tau_{d_1}^{-1/d_1})$
\begin{align}\label{eqE1est}
E_1(\sigma,B_1)&=\int_{B_1}|\nabla_{x_1} f_{\sigma}|^2\,dx_1+\sigma\int_{B_1}|x_1|^{2s}f_{\sigma}^2\,dx_1\\\nonumber
&\geq \int_{B_1\setminus B(0,\varepsilon)}|\nabla_{x_1} f_{\sigma}|^2\,dx_1+\sigma\varepsilon^{2s}\int_{B_1\setminus B(0,\varepsilon)}f_{\sigma}^2\,dx_1\\ \nonumber
&\geq
\int_{B_1\setminus B(0,\varepsilon)}f_{\sigma}^2 \,dx_1
\left(\min_{\substack{ f\in \tilde H^1_0(B_1\setminus B(0,\varepsilon))\\ f \neq 0}}\frac{\int_{B_1\setminus B(0,\varepsilon)}|\nabla_{x_1} f|^2\,dx_1}{\int_{B\setminus B(0,\varepsilon)}f^2\,dx_1}+\sigma\varepsilon^{2s}\right)\\\nonumber
&=\left(\tilde\mu_1(\varepsilon)+\sigma\varepsilon^{2s}\right)\int_{B_1\setminus B(0,\varepsilon)}f_{\sigma}^2 \,dx_1,
\end{align}
where  $\tilde H^1_0(B_1\setminus B(0,\varepsilon))$ denotes the closure of $C^{\infty}_c(B_1)$ in $H^1(B_1\setminus B(0,\varepsilon))$, and $\tilde \mu_1(\varepsilon)$ is the first eigenvalue of the Laplacian in 
$B_1 \setminus B(0,\varepsilon)$ with Dirichlet boundary conditions on $\partial B_1$ and Neumann boundary 
conditions on $\partial B(0,\varepsilon)$. It is well-known that $\tilde\mu_1(\varepsilon)\rightarrow\mu_1(B_1)$ as $\varepsilon\rightarrow 0^+$ (see, {\it e.g.}, Lanza de Cristoforis \cite{La12} and references therein). Moreover
\begin{equation}\label{eqfsb}
1=\int_{B_1}f_{\sigma}^2\,dx_1=\int_{B_1\setminus B(0,\varepsilon)}f_{\sigma}^2\,dx_1+\int_{B(0,\varepsilon)}f_{\sigma}^2\,dx_1.
\end{equation}
If $d_1>2$, we know from the H\"older inequality and the Sobolev inequality in the supercritical case (see, {\it e.g.}
Evans \cite[\S 5.6.3, Thm. 6 (i)]{Ev10}) and from \eqref{up_s_1} that
\[
\int_{B(0,\varepsilon)}f_{\sigma}^2\,dx_1\leq C\varepsilon^2\|\nabla f_{\sigma}\|_{L^2(B_1)}^2\leq C\varepsilon^2 E_1(\sigma,B_1)\leq C\varepsilon^2 \left(\mu_1(B_1)+\sigma \tau_{d_1}^{-\frac{2s}{d_1}}\right).
\]
Moreover, if $d_1=1$, the subcritical Sobolev inequality  (see, {\it e.g.}
Evans \cite[\S 5.6.3, Thm. 6 (ii)]{Ev10}) and \eqref{up_s_1}  imply that
\[
\int_{B(0,\varepsilon)}f_{\sigma}^2\,dx_1\leq C\varepsilon \|\nabla f_{\sigma}\|_{L^2(B_1)}^2\leq C\varepsilon E_1(\sigma,B_1)\leq C\varepsilon \left(\mu_1(B_1)+\sigma \tau_{d_1}^{-\frac{2s}{d_1}}\right).
\]
Finally, if $d_1=2$, exploiting the critical Sobolev inequality (see Burenkov \cite[\S 4.7, Thm. 15]{Bu98})  together again with \eqref{up_s_1}, 
\begin{align*}
\int_{B(0,\varepsilon)}f_{\sigma}^2\,dx_1&\leq C\varepsilon^2(1+|\log(\varepsilon)|) \|\nabla f_{\sigma}\|_{L^2(B_1)}^2\\
&\leq C\varepsilon^2(1+|\log(\varepsilon)|) E_1(\sigma,B_1)\\
&\leq C\varepsilon^2(1+|\log(\varepsilon)|) \left(\mu_1(B_1)+\sigma \tau_{d_1}^{-\frac{2s}{d_1}}\right).
\end{align*}
 See also  Colbois 
and Provenzano \cite[Appendix B]{CoPr18} where the above inequalities are derived with all the details.
In all the cases, the constant $C$ depends only on $d_1$ (in general it depends on the domain, which in this case is $B_1$).
Thus, by the above inequalities, and by \eqref{eqE1est} and \eqref{eqfsb}  one has that  for all $\sigma\in(0,+\infty)$, $s\in(0,+\infty)$ and $\varepsilon\in(0,\tau_{d_1}^{-1/d_1})$ 
\[
E_1(\sigma,B_1)\geq \left(\tilde\mu_1(\varepsilon)+\sigma\varepsilon^{2s}\right)\left(1-C\omega(\varepsilon)\left(\mu_1(B_1)+\sigma \tau_{d_1}^{-\frac{2s}{d_1}}\right)\right),
\]
for some continuous function $w:(0,\tau_{d_1}^{-1/d_1}) \to (0,+\infty)$ such that 
\[
\lim_{\varepsilon\rightarrow 0^+}w(\varepsilon)=0.
\]
Therefore, for all $t\in(0,+\infty)$, $s\in(0,+\infty)$ and $\varepsilon\in(0,\tau_{d_1}^{-1/d_1})$ we have
\begin{align*}
&G_s(t) \geq t^{-\frac{2}{d_1}}\left(\tilde\mu_1(\varepsilon)+\sigma(t)\varepsilon^{2s}\right)\left(1-C\omega(\varepsilon)\left(\mu_1(B_1)+\sigma(t) \tau_{d_1}^{-\frac{2s}{d_1}}\right)\right),\\
 &G_s(t)\leq t^{-\frac{2}{d_1}}\left(\mu_1(B_1)+\sigma(t) \tau_{d_1}^{-\frac{2s}{d_1}}\right).
\end{align*}
Since $\lim_{s\rightarrow 0^+}\sigma(t)=\tilde\sigma(t)$ we have
\[
\limsup_{s\rightarrow 0^+}G_s(t)\leq G_0(t) \qquad \forall t>0
\]
and, for all $\varepsilon\in(0,\tau_{d_1}^{-1/d_1})$
\[
\liminf_{s\rightarrow 0^+}G_s(t)\geq t^{-\frac{2}{d_1}}\Big(\tilde\mu_1(\varepsilon)+\tilde \sigma(t)\Big)\Big(1-C\omega(\varepsilon)\left(\mu_1(B_1)+\tilde\sigma(t)\right)\Big) \qquad \forall t>0.
\]
and then
\[
\liminf_{s\rightarrow 0^+}G_s(t)\geq G_0(t) \qquad \forall t>0.
\]
We have then proved that
\[
\lim_{s\rightarrow 0^+}G_s(t)=G_0(t) \qquad \forall t>0.
\]
and accordingly the statement follows.
\end{proof}
\begin{rem}
Clearly $\mu_1(B_1)+\tilde\sigma$ is the first eigenvalue of problem \eqref{eigsplit2} with $\mu = \tilde \sigma$ when we set $s=0$. We easily see that $G_0(t)$ is optimized when 
\begin{equation}\label{eq:t*0}
t^*=\left(\frac{d_2\mu_1(B_1)}{d_1\mu_1(B_2)}\right)^{\frac{d_1d_2}{2d}}V^{\frac{d_1}{d}}
\end{equation}
 and, accordingly,
\begin{equation}\label{eq:t*G0}
G_0(t^*)=V^{-\frac{2}{d}}\frac{d}{d_1}\mu_1(B_1)\left(\frac{d_1\mu_1(B_2)}{d_2\mu_1(B_1)}\right)^{\frac{d_2}{d}}.
\end{equation}
The optimum given by \eqref{eq:t*0}, \eqref{eq:t*G0} is the expected one, since as $s \to 0^+$ the problem converges to the Dirichlet problem for the classical Laplacian in cartesian product domains.
Note that the limit as $s\rightarrow 0^+$ of the lower bound \eqref{prop:lb1} on the optimal $t$ minimizing $G_s(t)$ for any $s$ (i.e., the optimal $|\Omega^*_1|$) computed in Proposition \ref{prop:lb} equals  the $t^*$ minimizing $G_0$. Therefore the lower bound in Proposition \ref{prop:lb} is sharp in the limit $s\to0^+$.
\end{rem}

Next, we pass to  consider the limit   as $s\rightarrow+\infty$. We first need a preliminary result on the asymptotic behavior of the first eigenvalue 
\[
E_1(s):= E_1(1,\mathbb{R}^{d_1}) 
\]
 of  Schr\"odinger  operator  $-\Delta_{x_1} + |x_1|^{2s}$ on $\mathbb{R}^{d_1}$ as  $s\rightarrow+\infty$. Next 
 Lemma \ref{lem:sclim} is probably known but we include a detailed proof for the sake of completeness.
\begin{lem}\label{lem:sclim}
Let $B(0,1) \subseteq \mathbb{R}^{d_1}$ be the ball of radius one and centered at the origin. Let $\mu_1(B(0,1))$ be the 
first  eigenvalue of the Dirichlet Laplacian on $B(0,1)$.  Then
\begin{equation}\label{lem:sclim1}
\lim_{s\to+\infty}E_1(s) =\mu_1(B(0,1)) = \tau_{d_1}^{-\frac{2}{d_1}}\mu_1(B_1)
\end{equation}
\end{lem}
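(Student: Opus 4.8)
The plan is to prove the algebraic identity directly and then obtain the limit by matching upper and lower bounds, exploiting the fact that $|x_1|^{2s}\to 0$ for $|x_1|<1$ and $|x_1|^{2s}\to+\infty$ for $|x_1|>1$, so that in the limit the potential degenerates into an infinite well on $B(0,1)$ forcing Dirichlet conditions on $\partial B(0,1)$. The identity $\mu_1(B(0,1))=\tau_{d_1}^{-2/d_1}\mu_1(B_1)$ is immediate from the scaling law $\mu_1(B(0,\rho))=\rho^{-2}\mu_1(B(0,1))$ together with the observation that the unit-volume ball $B_1$ has radius $\rho=\tau_{d_1}^{-1/d_1}$.

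For the upper bound I would take the first Dirichlet eigenfunction $\phi$ of $-\Delta$ on $B(0,1)$, normalized with $\|\phi\|_{L^2}=1$ and extended by zero, as a trial function in the variational characterization \eqref{eq:mmpsR}. Since $\phi$ is supported in $\overline{B(0,1)}$, where $|x_1|\le 1$, the potential integrand $|x_1|^{2s}\phi^2$ is dominated by $\phi^2$ and tends to $0$ pointwise on $\{|x_1|<1\}$; dominated convergence then gives $\int_{B(0,1)}|x_1|^{2s}\phi^2\,dx_1\to 0$, hence $\limsup_{s\to+\infty}E_1(s)\le\mu_1(B(0,1))$. In particular $E_1(s)$ is uniformly bounded for $s$ large.

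For the lower bound let $f_s$ be the normalized first eigenfunction (it exists since $|x_1|^{2s}$ is confining). The uniform bound $E_1(s)\le C$ forces both $\int|\nabla f_s|^2\,dx_1\le C$ and $\int|x_1|^{2s}f_s^2\,dx_1\le C$. Fixing $\delta>0$, on $\{|x_1|>1+\delta\}$ one has $|x_1|^{2s}\ge(1+\delta)^{2s}$, so $\int_{|x_1|>1+\delta}f_s^2\,dx_1\le C(1+\delta)^{-2s}\to 0$, i.e. the mass of $f_s$ concentrates in $B(0,1+\delta)$. Choosing a cutoff $\eta$ equal to $1$ on $B(0,1+\delta)$, vanishing outside $B(0,1+2\delta)$ and with $|\nabla\eta|\le C/\delta$, the admissible function $\eta f_s\in H^1_0(B(0,1+2\delta))$ yields $\mu_1(B(0,1+2\delta))\le\int|\nabla(\eta f_s)|^2\,dx_1\big/\int(\eta f_s)^2\,dx_1$. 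Expanding the numerator, bounding $\int\eta^2|\nabla f_s|^2\,dx_1\le E_1(s)$, absorbing the cross term by Young's inequality at the cost of a factor $1+\varepsilon$, and noting that the cutoff term satisfies $\int f_s^2|\nabla\eta|^2\,dx_1\le(C/\delta)^2\int_{|x_1|>1+\delta}f_s^2\,dx_1\to 0$ while the denominator tends to $1$, I obtain $\mu_1(B(0,1+2\delta))\le(1+\varepsilon)\liminf_{s\to+\infty}E_1(s)$. Sending $\varepsilon\to0$ and then $\delta\to0^+$, and using the continuity of $r\mapsto\mu_1(B(0,r))$, closes the lower bound and hence the proof.

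The main obstacle is the lower bound: one must convert the concentration estimate into a genuine Dirichlet eigenvalue via the cutoff while controlling the error generated by $\nabla\eta$ on the thin annulus $\{1+\delta<|x_1|<1+2\delta\}$. The order of limits is essential here, as the cutoff error is annihilated only by the fast tail decay $(1+\delta)^{-2s}$ for fixed $\delta$, so the limit $s\to+\infty$ must be taken before $\delta\to0^+$.
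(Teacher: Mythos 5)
Your proof is correct and follows essentially the same strategy as the paper: an upper bound via a Dirichlet trial function supported in $\overline{B(0,1)}$, and a lower bound obtained by showing that the eigenfunction mass concentrates in $B(0,1+\delta)$ (via the tail bound forced by the confining potential and the uniform bound on $E_1(s)$) and then cutting off to compare with the Dirichlet eigenvalue of a slightly larger ball, shrinking the ball only after $s\to+\infty$. The differences are purely cosmetic: you control the cross term in the cutoff step with Young's inequality at the cost of a factor $1+\varepsilon$, whereas the paper avoids cross terms by integrating by parts and substituting the eigenvalue equation $-\Delta_{x_1}f_s=E_1(s)f_s-|x_1|^{2s}f_s$, and your upper bound uses dominated convergence on $B(0,1)$ itself rather than the paper's trial function on the shrunk ball $B(0,1-h)$.
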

\begin{proof}
In order to prove \eqref{lem:sclim1} we provide sharp lower and upper bounds. We begin with the upper bound.
Let $h \in (0,1)$. Let $u_h$ be the first $L^2$-normalized eigenfunction of the Dirichlet Laplacian on $B(0,1-h)$. We shall
still denote by $u_h$ the extension by zero of $u_h$ to $\mathbb{R}^{d_1}$. Clearly, the eigenvalue corresponding 
to $u_h$ is 
\[
\frac{\mu_1(B(0,1))}{(1-h)^2}.
\]
From the min-max principle (see \eqref{eq:mmpsR}), we have that 
\begin{equation}\label{lem:sclim2}
E_1(s) \leq \int_{B(0,1-h)} |\nabla_{x_1}u_h|^2+|x_1|^{2s}u_h^2\,dx_1 \leq \frac{\mu_1(B(0,1))}{(1-h)^2}+ (1-h)^{2s}
\end{equation}
for all $s >0$.
Then
\[
\limsup_{s \to +\infty}E_1(s) \leq \frac{\mu_1(B(0,1))}{(1-h)^2}.
\]
Since $h \in (0,1)$ is arbitrary we also deduce that 
\[
\limsup_{s \to +\infty}E_1(s) \leq  \mu_1(B(0,1)).
\]
Note that by letting $h \to 0$ in \eqref{lem:sclim2}, we also obtain that 
\[
E_1(s) \leq \mu_1(B(0,1)) +1
\]
for all $s>0$.

Now we pass to consider the lower bound. Let $s >0$. Let $f_s$ denote an $L^2$-normalized eigenfunction corresponding to $E_1(s)$.
 Then for all $h \in (0,1)$
\begin{align*}
(1+h)^{2s} \int_{\mathbb{R}^{d_1} \setminus \overline{B(0,1+h)}}f^2_s\,dx_1 &\leq
 \int_{\mathbb{R}^{d_1} \setminus \overline{B(0,1+h)}}|x_1|^{2s}f^2_s\,dx_1\\
 &\leq 
E_1(s) \\
&\leq \mu_1(B(0,1)) +1,
\end{align*}
and then
\[
 \int_{\mathbb{R}^{d_1} \setminus \overline{B(0,1+h)}}f^2_s\,dx_1 \leq \frac{ \mu_1(B(0,1)) +1}{(1+h)^{2s}},
\]
 which in turn implies that  $f_s \to 0$ in $L^2\big(\mathbb{R}^{d_1} \setminus \overline{B(0,1+h)}\big)$ for 
 all $h \in (0,1)$.  Next we take $h,h' \in \mathbb{R}$ with  
 $0 \leq h' \leq h \leq 1$ and $\rho \in C_c^{\infty}(\mathbb{R}^{d_1})$ be such that
 \[
 0 \leq \rho \leq 1, \quad \mathrm{supp}\,\rho \subseteq B(0,1+h), \quad \rho_{|B(0,1+h')} = 1.
 \]
 By the min-max principle for the first eigenvalue of the Dirichlet Laplacian in $B(0,1+h)$ with test 
 function $\rho f_s$ and integrating by parts we have
 \[
 (1+h)^{-2}\mu_1(B(0,1))\int_{\mathbb{R}^{d_1}}\rho^2f^2_s\,dx_1 \leq \int_{\mathbb{R}^{d_1}}|\nabla_{x_1}\rho|^2f^2_s\,dx_1 - \int_{\mathbb{R}^{d_1}}\rho^2f_s\Delta_{x_1} f_s\,dx_1.
 \]
 By the eigenvalue equation $-\Delta_{x_1}f_s = E_1(s)f_s-|x_1|^{2s}f_s$ we can deduce that 
 \begin{align*}
  (1+h)^{-2}\mu_1(B(0,1))\int_{\mathbb{R}^{d_1}} \rho^2f^2_s\,dx_1 \leq\,\,&
  \int_{B(0,1+h) \setminus \overline{B(0,1+h')}}|\nabla_{x_1}\rho|^2f^2_s\,dx_1 \\
  &+ E_1(s)\int_{\mathbb{R}^{d_1}}\rho^2f_s^2\,dx_1 - \int_{\mathbb{R}^{d_1}}|x_1|^{2s}\rho^2f_s^2\,dx_1\\
  \leq &\,\,\int_{B(0,1+h) \setminus \overline{B(0,1+h')}}|\nabla_{x_1}\rho|^2f^2_s\,dx_1  \\
  &+ E_1(s)\int_{\mathbb{R}^{d_1}}\rho^2f_s^2\,dx_1.
 \end{align*}
 Since clearly
 \[
 1-\int_{\mathbb{R}^{d_1} \setminus \overline{B(0,1+h')}}f_s^2\,dx_1 \leq \int_{\mathbb{R}^{d_1}} \rho^2f_s^2\,dx_1 \leq 1,
 \]
 then
 \begin{align*}
  (1+h)^{-2}\mu_1(B(0,1)) \leq &\,\, E_1(s) +  \int_{B(0,1+h) \setminus \overline{B(0,1+h')}}|\nabla_{x_1}\rho|^2f^2_s\,dx_1\\
 &+  (1+h)^{-2}\mu_1(B(0,1))\int_{\mathbb{R}^{d_1} \setminus \overline{B(0,1+h')}}\rho^2f^2_s\,dx_1.
 \end{align*}
 Since $\rho^2$ and $|\nabla_{x_1}\rho|^2$ are uniformly bounded, then both 
 $ \int_{B(0,1+h) \setminus \overline{B(0,1+h')}}|\nabla_{x_1}\rho|^2f^2_s\,dx_1$ and  $\int_{\mathbb{R}^{d_1} \setminus \overline{B(0,1+h')}}\rho^2f^2_s\,dx_1$ converges to zero as $s \to +\infty$. That is we proved the lower bound
 \[
 \liminf_{s \to +\infty}E_1(s) \geq  \frac{\mu_1(B(0,1))}{  (1+h)^{2}}
 \]
 which implies 
  \[
 \liminf_{s \to +\infty}E_1(s) \geq \mu_1(B(0,1)),
 \]
 since $h \in (0,1)$ is arbitrary.
 The first equality of \eqref{lem:sclim2} is proved. The second equality  follows simply by rescaling.
\end{proof}

\begin{prop}\label{prop:sinf}
Let $G_s$ be the function defined in $\eqref{Gs}$. Let
\[
G_\infty(t):=\begin{cases}\vspace{.1cm}
\frac{\mu_1(B_1)}{t^{\frac{2}{d_1}}}\,, & t\in(0,\tau_{d_1}),\\ 
\frac{\mu_1(B_1)}{\tau_{d_1}^{\frac{2}{d_1}}}\,, & t\in[\tau_{d_1},+\infty).
\end{cases}
\]
Then
\[
\lim_{s\to+\infty}G_s(t) = G_\infty(t) \qquad \forall t>0.
\]
\end{prop}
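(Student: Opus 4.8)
The plan is to strip the joint $s$-dependence out of $G_s$ by a single rescaling, thereby reducing the statement to the behaviour as $s\to+\infty$ of the first Dirichlet eigenvalue of a Schr\"odinger operator with a \emph{fixed} coupling constant on a \emph{fixed} ball. Recall $B_1=B(0,\tau_{d_1}^{-1/d_1})$. If $u$ is the first eigenfunction of $E_1(\sigma(t),B_1)$ and we set $y=t^{1/d_1}x_1$, then, using $\sigma(t)|x_1|^{2s}=\tilde\sigma(t)\big(t^{1/d_1}|x_1|\big)^{2s}=\tilde\sigma(t)|y|^{2s}$, a direct computation gives
\[
G_s(t)=t^{-\frac{2}{d_1}}E_1(\sigma(t),B_1)=E_1\big(c(t),B(0,\rho_t)\big),
\]
where $\rho_t=(t/\tau_{d_1})^{1/d_1}$, the constant $c(t)=\mu_1(B_2)t^{2/d_2}V^{-2/d_2}>0$ does not depend on $s$, and $E_1(c,B(0,\rho))$ denotes the first Dirichlet eigenvalue of $-\Delta_{x_1}+c\,|y|^{2s}$ on $B(0,\rho)\subseteq\mathbb{R}^{d_1}$ (so that $s$ now enters only through the exponent of the potential). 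Since $\rho_t<1$, $\rho_t=1$, or $\rho_t>1$ according as $t<\tau_{d_1}$, $t=\tau_{d_1}$, or $t>\tau_{d_1}$, the threshold $\tau_{d_1}$ in $G_\infty$ is exactly the value of $t$ at which the confining radius of $|y|^{2s}$ (namely $|y|=1$) crosses the radius of the rescaled domain. It therefore suffices to show, for fixed $c>0$ and $\rho>0$, that $E_1(c,B(0,\rho))\to\mu_1\big(B(0,\min\{\rho,1\})\big)$ as $s\to+\infty$, which is Lemma \ref{lem:sclim} with $\mathbb{R}^{d_1}$ replaced by a ball and $1$ by $c$.

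In the regime $t\le\tau_{d_1}$, i.e.\ $\rho_t\le1$, the limit is elementary. Testing the Rayleigh quotient with the first Dirichlet eigenfunction $\phi$ of $B(0,\rho_t)$ yields
\[
E_1\big(c(t),B(0,\rho_t)\big)\le\mu_1\big(B(0,\rho_t)\big)+c(t)\int_{B(0,\rho_t)}|y|^{2s}\phi^2\,dy,
\]
and since $|y|\le\rho_t\le1$ on $B(0,\rho_t)$ the integral tends to $0$ by dominated convergence; the reverse inequality $E_1(c(t),B(0,\rho_t))\ge\mu_1(B(0,\rho_t))$ is immediate from the non-negativity of the potential. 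Hence $E_1(c(t),B(0,\rho_t))\to\mu_1(B(0,\rho_t))$, and unwinding the scaling via $\mu_1(B(0,\rho_t))=\rho_t^{-2}\mu_1(B(0,1))=t^{-2/d_1}\mu_1(B_1)$ gives $G_s(t)\to t^{-2/d_1}\mu_1(B_1)=G_\infty(t)$ for all $t\le\tau_{d_1}$ (at $t=\tau_{d_1}$ this common value is $\mu_1(B(0,1))=\tau_{d_1}^{-2/d_1}\mu_1(B_1)$).

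The regime $t>\tau_{d_1}$, i.e.\ $\rho_t>1$, is the main obstacle and reproduces the two-sided argument of Lemma \ref{lem:sclim}; the only additional bookkeeping is the constant $c(t)$ and the finite outer radius $\rho_t$, neither of which affects the confinement mechanism. For the upper bound I would test with the first Dirichlet eigenfunction of $B(0,1-h)$, extended by zero, to obtain $E_1(c(t),B(0,\rho_t))\le(1-h)^{-2}\mu_1(B(0,1))+c(t)(1-h)^{2s}$, whence $\limsup_{s\to+\infty}E_1\le(1-h)^{-2}\mu_1(B(0,1))$ and then $\limsup_{s\to+\infty}E_1\le\mu_1(B(0,1))$ as $h\to0^+$; in particular one also gets a bound $E_1(c(t),B(0,\rho_t))\le M$ uniform in $s$. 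For the lower bound I would localize: choosing $0<h'<h<\rho_t-1$, from $c(t)\int|y|^{2s}f_s^2\le E_1\le M$ and $|y|^{2s}\ge(1+h)^{2s}$ on $B(0,\rho_t)\setminus\overline{B(0,1+h)}$ one finds
\[
\int_{B(0,\rho_t)\setminus\overline{B(0,1+h)}}f_s^2\,dy\le\frac{M}{c(t)(1+h)^{2s}},
\]
which tends to $0$ as $s\to+\infty$, so the $L^2$-mass of the normalized eigenfunction $f_s$ concentrates in $\overline{B(0,1+h)}$. Applying the min-max principle for the Dirichlet Laplacian on $B(0,1+h)$ to the cut-off $\chi f_s$ (with $0\le\chi\le1$, $\chi\equiv1$ on $B(0,1+h')$ and $\mathrm{supp}\,\chi\subseteq B(0,1+h)$) and eliminating $\Delta_{x_1}f_s$ through the eigenvalue equation exactly as in Lemma \ref{lem:sclim}, all error terms vanish as $s\to+\infty$, giving $\liminf_{s\to+\infty}E_1\ge(1+h)^{-2}\mu_1(B(0,1))$ and hence $\liminf_{s\to+\infty}E_1\ge\mu_1(B(0,1))$ upon $h\to0^+$. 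Combined with $\mu_1(B(0,1))=\tau_{d_1}^{-2/d_1}\mu_1(B_1)$ this yields $G_s(t)\to\mu_1(B(0,1))=G_\infty(t)$ for $t>\tau_{d_1}$, completing all cases.
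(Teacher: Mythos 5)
Your proof is correct, but it takes a partly different route from the paper's, so a comparison is in order. The paper never rescales: it works on the fixed ball $B_1$ with the $s$-dependent coupling $\sigma(t)=\tilde\sigma(t)t^{2s/d_1}$, and for $t\geq\tau_{d_1}$ it obtains the lower bound purely by comparison, namely $E_1(\sigma,B_1)\geq E_1(\sigma,\mathbb{R}^{d_1})=\sigma^{1/(s+1)}E_1(1,\mathbb{R}^{d_1})$, together with $\lim_{s\to+\infty}\sigma(t)^{1/(s+1)}=t^{2/d_1}$ and Lemma \ref{lem:sclim}; in this way the concentration/cutoff machinery is run only once, inside the proof of that lemma for the whole-space operator. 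You instead normalize first (the substitution $y=t^{1/d_1}x_1$ and the identity $G_s(t)=E_1\big(c(t),B(0,\rho_t)\big)$ with $c(t)=t^{-2/d_1}\tilde\sigma(t)$ independent of $s$ are correct) and then prove a bounded-domain analogue of Lemma \ref{lem:sclim} from scratch, repeating its two-sided argument on $B(0,\rho_t)$ when $\rho_t>1$. What your route buys: the threshold $\tau_{d_1}$ becomes visibly the value of $t$ at which the rescaled domain radius $\rho_t$ crosses the confinement radius $|y|=1$, and no whole-space eigenvalue or homogeneity identity is needed, so the argument stays entirely on bounded domains. What the paper's route buys: granted Lemma \ref{lem:sclim}, the hard case reduces to three lines, whereas you must redo the cutoff argument with the extra bookkeeping of $c(t)$, $\rho_t$, the constraint $h<\rho_t-1$, and a uniform-in-$s$ bound on $E_1$ --- all of which you do supply (the uniform bound from your test-function estimate, the mass-leakage estimate outside $\overline{B(0,1+h)}$, and the integration-by-parts step, which carries over since the cutoff is supported strictly inside $B(0,\rho_t)$). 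A minor structural difference: the paper treats the borderline value $t=\tau_{d_1}$ within the supercritical case, while your dominated-convergence upper bound absorbs it into the subcritical case $\rho_t\leq 1$; both are valid.
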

\begin{proof}
We need to distinguish two cases: 
 $t<\tau_{d_1}$ and $t\geq \tau_{d_1}$.

Assume first that  $t<\tau_{d_1} $. We have immediately from the min-max principle  that for all $\sigma >0$
\[
E_1(\sigma,B_1)\geq \mu_1(B_1),
\]
and in particular
\[
\liminf_{s\rightarrow+\infty}E_1(\sigma,B_1)\geq\mu_1(B_1).
\]
Moreover, for any $f\in H^1_0(B_1)$ with $\|f\|_{L^2(B_1)}=1$  
\[
\int_{B_1}|\nabla_{x_1} f|^2\,dx_1+\tilde\sigma(t)\int_{B_1}(t^{\frac{1}{d_1}}|x_1|)^{2s}f^2\,dx_1\leq  \int_{B_1}|\nabla_{x_1} f|^2\,dx_1+ \left(t^{\frac{1}{d_1}}\tau_{d_1}^{-\frac{1}{d_1}}\right)^{2s}\tilde\sigma(t).
\]
Taking the minimum over all $f\in H^1_0(B_1)$ with $\|f\|_{L^2(B_1)}=1$ we obtain
\begin{equation}\label{up_2_0}
E_1\left(\tilde\sigma(t) t^{\frac{2s}{d_1}},B_1\right)\leq\mu_1(B_1)+ \left(t^{\frac{1}{d_1}}\tau_{d_1}^{-\frac{1}{d_1}}\right)^{2s}\tilde\sigma(t).
\end{equation}
Since $t<\tau_{d_1}$ we have that 
\[
\limsup_{s\rightarrow+\infty}E_1\left(\tilde\sigma(t) t^{\frac{2s}{d_1}},B_1\right)\leq\mu_1(B_1).
\]
We have proved that
\[
\lim_{s\rightarrow+\infty}E_1\left(\tilde\sigma(t) t^{\frac{2s}{d_1}},B_1\right)=\mu_1(B_1),
\]
and therefore, by recalling the definition \eqref{Gs}, we have
\[
\lim_{s\rightarrow+\infty}G_s(t)=\frac{\mu_1(B_1)}{t^{\frac{2}{d_1}}}.
\]

Next we pass to consider the case   $ t\geq \tau_{d_1}$. Let $L>t$ be fixed. We denote by $\mu_1\left(B\left(0,L^{-\frac{1}{d_1}}\right)\right)$ the first eigenvalue of the Dirichlet Laplacian on the ball $B\left(0,L^{-\frac{1}{d_1}}\right) \subseteq \mathbb{R}^{d_1}$ centered in zero and of radius $L^{-\frac{1}{d_1}}$. Note that the volume of $B\left(0,L^{-\frac{1}{d_1}}\right)$ is $\frac{\tau_{d_1}}{L}<1$. In particular $\mu_1\left(B\left(0,L^{-\frac{1}{d_1}}\right)\right)=\left(\frac{L}{\tau_{d_1}}\right)^{\frac{2}{d_1}}\mu_1(B_1)$. From the inclusion $H^1_0\left(B\left(0,L^{-\frac{1}{d_1}}\right)\right)\subset H^1_0(B_1)$ (where we understand that any $u\in H^1_0\left(B\left(0,L^{-\frac{1}{d_1}}\right)\right)$ is extended by zero to $B_1$), and from analogous computation of those in \eqref{up_2_0}, we obtain

\begin{align*}
E_1\left(\tilde\sigma(t) t^{\frac{2s}{d_1}},B_1\right)&\leq E_1\left(\tilde\sigma(t) t^{\frac{2s}{d_1}},B\left(0,L^{-\frac{1}{d_1}}\right)\right)\\
&\leq\mu_1\left(B\left(0,L^{-\frac{1}{d_1}}\right)\right)+ \left(t^{\frac{1}{d_1}}L^{-\frac{1}{d_1}}\right)^{2s}\tilde\sigma(t)\\
&=\left(\frac{L}{\tau_{d_1}}\right)^{\frac{2}{d_1}}\mu_1(B_1)+ \left(t^{\frac{1}{d_1}}L^{-\frac{1}{d_1}}\right)^{2s}\tilde\sigma(t).
\end{align*}
Then, for all $L>t$,
\[
\limsup_{s\rightarrow+\infty}E_1\left(\tilde\sigma(t) t^{\frac{2s}{d_1}},B_1\right)\leq \left(\frac{L}{\tau_{d_1}}\right)^{\frac{2}{d_1}}\mu_1(B_1),
\]
and therefore
\[
\limsup_{s\rightarrow+\infty}E_1\left(\tilde\sigma(t) t^{\frac{2s}{d_1}},B_1\right)\leq \left(\frac{t}{\tau_{d_1}}\right)^{\frac{2}{d_1}}\mu_1(B_1),
\]
which in turn implies
\[
\limsup_{s\rightarrow+\infty}G_s(t)\leq\frac{\mu_1(B_1)}{\tau_{d_1}^{\frac{2}{d_1}}}.
\]
In order to prove a lower bound we can proceed as follows.
We simply note that, by \eqref{minF:ineq}, for all $\sigma\in(0,+\infty)$,
\[
E_1(\sigma,B_1)\geq E_1(\sigma,\mathbb R^{d_1})=\sigma^{\frac{1}{s+1}}E_1(1,\mathbb R^{d_1}). 
\]
Now, since $\left(\sigma(t)\right)^\frac{1}{s+1}=\left(\tilde\sigma(t)\right)^\frac{1}{s+1} t^{\frac{2s}{d_1(s+1)}}$, then
 $\lim_{s\to+\infty}(\sigma(t))^{\frac{1}{s+1}}=t^{\frac{2}{d_1}}$. Moreover, by Lemma \ref{lem:sclim}  
 \[
\lim_{s\rightarrow+\infty}E_1(1,\mathbb R^{d_1})=\frac{\mu_1(B_1)}{\tau_{d_1}^{\frac{2}{d_1}}}.
\]
 Thus, we immediately deduce that
\[
\liminf_{s\rightarrow+\infty} G_s(t)\geq\frac{\mu_1(B_1)}{\tau_{d_1}^{\frac{2}{d_1}}}.
\]
This implies, along with the upper bound, that for $t\geq \tau_{d_1}$,
\[
\lim_{s\rightarrow+\infty}G_s(t)=\frac{\mu_1(B_1)}{\tau_{d_1}^{\frac{2}{d_1}}}.
\]
Thus the statement is proved.
\end{proof}
 
 \begin{rem}
 Note that in the limiting case $s\rightarrow+\infty$, we have a continuum of optimal $t$, namely all $t\geq \tau_{d_1}$ minimize $G_{\infty}(t)$. We also note that lower bound on the optimal $t$ provided for any $s$  in Proposition \ref{prop:lb} goes to $\tau_{d_1}$ as $s\rightarrow +\infty$, therefore, also in this sense, that lower bound is sharp.
\end{rem}


\section{Some numerical computations}\label{sec:num}
In this last section we present some numerical computations  in the planar case, that is in the case $d_1=d_2=1$. First, we consider the minimization problem of the first eigenvalue in the class of cartesian product domains (i.e. rectangles of $\mathbb{R}^2$). Then we also numerically compute the first eigenvalue in the case the domain is a ball in $\mathbb{R}^2$ and we make some comparisons with the case of rectangles.  For simplicity we  also set $V=1$, but by a simple scaling argument one can also deduce similar results for the general case $V>0$. Note that in this case 
\[
\tau_{d_1}=2, \qquad \mu_1(B_1)\tau^{-\frac{2}{d_1}}=\frac{\pi^2}{4} \approx 2.467 
\]
The numerical scheme to solve the two decoupled one dimensional problems has been implemented in Phyton  with the help of Gabriele Santin 
({\it FBK-ICT}). 

The figures below show the plot of $G_s(t)$ and the numerical computation of its minimum for some values of $s$.  
We recall that the function $G_s(t)$, defined in \eqref{Gs},  equals $\lambda_1(\Omega_1 \times \Omega_2)$ when $\Omega_1 \subseteq \mathbb{R}^{d_1}$, $\Omega_2 \subseteq \mathbb{R}^{d_2}$ are two balls, 
the first one being centered in zero and with $|\Omega_1||\Omega_2|=1$, $|\Omega_1|=t$. The unique minimum 
point $t^*$ of $G_s(t)$ represents then the volume of $\Omega^*_1$, where $\Omega^*_1$ and $\Omega_2^*$ are 
the balls which realizes the minimum for the first eigenvalue  (see Theorem  \ref{thm:uniqmin}).

The first figure corresponds to the limiting case $s \to 0^+$. As expected the minimum is attained 
at $t=1$, which means, with the notation of Theorem \ref{thm:uniqmin}, $|\Omega_1^*|=|\Omega_2^*|=1$. Indeed, for 
the standard Laplacian in two dimension the minimum over the class of cartesian product domains is attained by a square. The other figures show the numerical computation of the minimum for $s=0.5$, $s=1$, $s=2$, $s=3$, and 
$s=150$.

We note that the numerical computations agree with the lower bounds of Propositions \ref{prop:lb} and \ref{prop:lbe}. Moreover, they also agree with the asymptotic behavior as $s \to +\infty$ computed in Proposition 
\ref{prop:sinf}, since $G_s(t)$ tends to flatten to the value $\frac{\pi^2}{4}$ for $t>\tau_{d_1}=2$ when $s$ increases.
  
\begin{figure}[H]
\centering
\begin{minipage}{.5\textwidth}
  \centering
  \includegraphics[width=\linewidth]{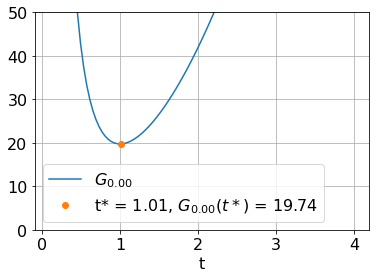}
  \captionof*{figure}{s=0}
\end{minipage}%
\begin{minipage}{.5\textwidth}
  \centering
  \includegraphics[width=\linewidth]{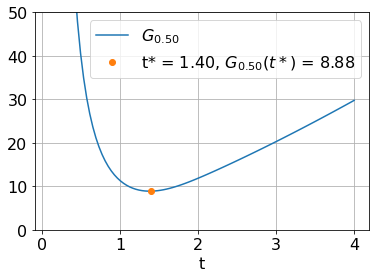}
  \captionof*{figure}{s=0.5}
\end{minipage}
\end{figure}
\begin{figure}[H]
\begin{minipage}{.5\textwidth}
  \centering
  \includegraphics[width=\linewidth]{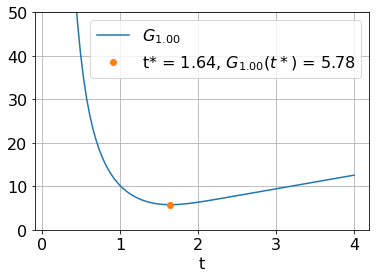}
  \captionof*{figure}{s=1}
\end{minipage}%
\begin{minipage}{.5\textwidth}
  \centering
  \includegraphics[width=\linewidth]{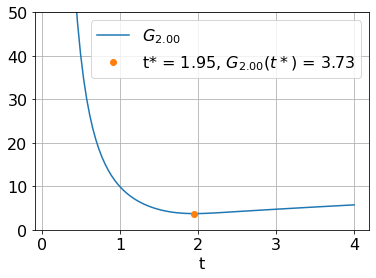}
  \captionof*{figure}{s=2}
\end{minipage}
\end{figure}

\begin{figure}[H]
\begin{minipage}{.5\textwidth}
  \centering
  \includegraphics[width=\linewidth]{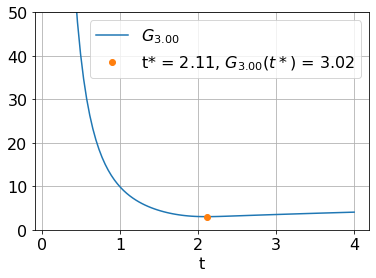}
  \captionof*{figure}{s=3}
\end{minipage}%
\begin{minipage}{.5\textwidth}
  \centering
  \includegraphics[width=\linewidth]{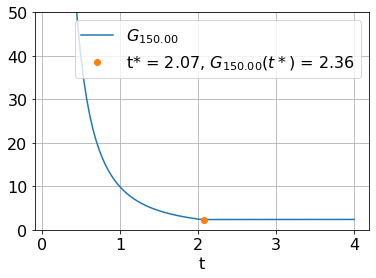}
  \captionof*{figure}{s=150}
\end{minipage}
\end{figure}

We conclude by comparing the eigenvalues on rectangles in $\mathbb R^2$  with those of the disk with the same area, centered at the origin.

\begin{figure}[H]
\centering
  \includegraphics[width=\linewidth]{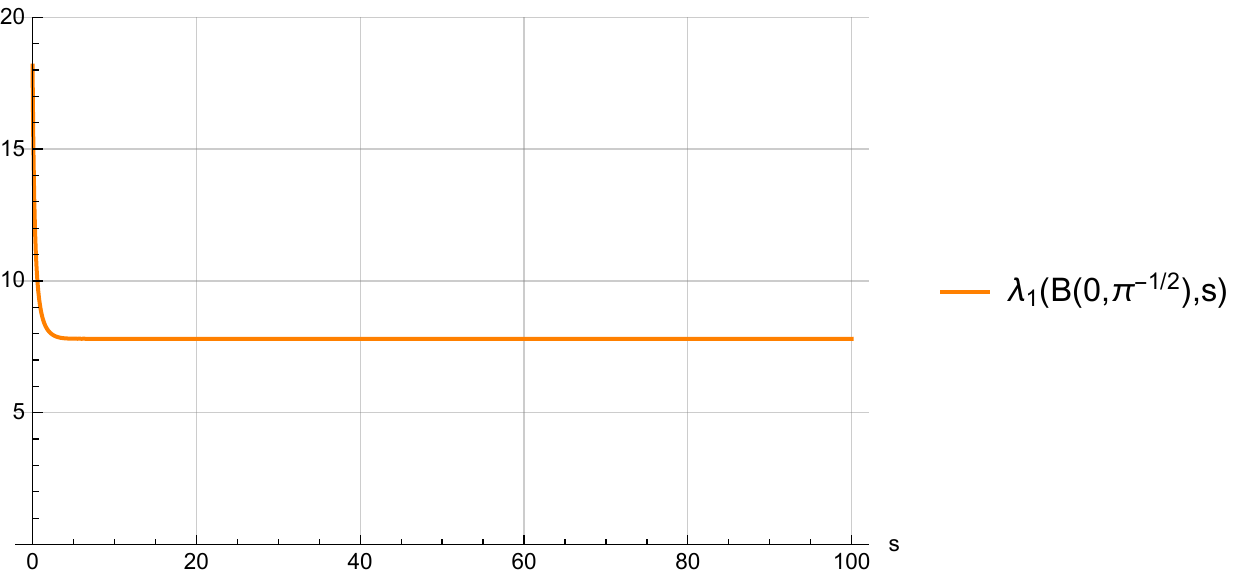}
  \captionof*{figure}{$\lambda_1(B(0,\pi^{-1/2}))$ as a function of $s$.}
\end{figure}
When $s=0$, clearly the disk of unit area (i.e., radius $\pi^{-1/2}$) has lower first eigenvalue than any rectangle of the same area (Faber-Krahn inequality). In fact, for $s=0$
$$
\lambda_1(B(0,\pi^{-1/2}))\approx 18.17 {\rm \ \ \ and\ \ \ }\lambda_1(\Omega^*_1\times\Omega^*_2)\approx 19.74
$$
Already when $s=0.5$, we have
$$
\lambda_1(B(0,\pi^{-1/2}))\approx 10.45  {\rm \ \ \ and\ \ \ }\lambda_1(\Omega^*_1\times\Omega^*_2)\approx 8.88
$$
and, when $s=1$, we have
$$
\lambda_1(B(0,\pi^{-1/2}))\approx 8.90 {\rm \ \ \ and\ \ \ } \lambda_1(\Omega^*_1\times\Omega^*_2)\approx 5.78
$$
We also note that, as $s\rightarrow+\infty$
$$
\lambda_1(B(0,\pi^{-1/2}))\approx 7.75\approx\frac{\pi^3}{4}
$$
while
$$
\lambda_1(\Omega^*_1\times\Omega^*_2)\approx\frac{\pi^2}{4}
$$
The numerics suggest that there exists some $s_0<\frac{1}{2}$ such that the disk of unit area is no more the minimizer among all domains of the same area, and we always find a rectangle doing better.

We have also computed the first eigenvalue on the disk of radius $1$ and of radius $2$ as functions of $s$.

\begin{figure}[H]
\centering
  \includegraphics[width=\linewidth]{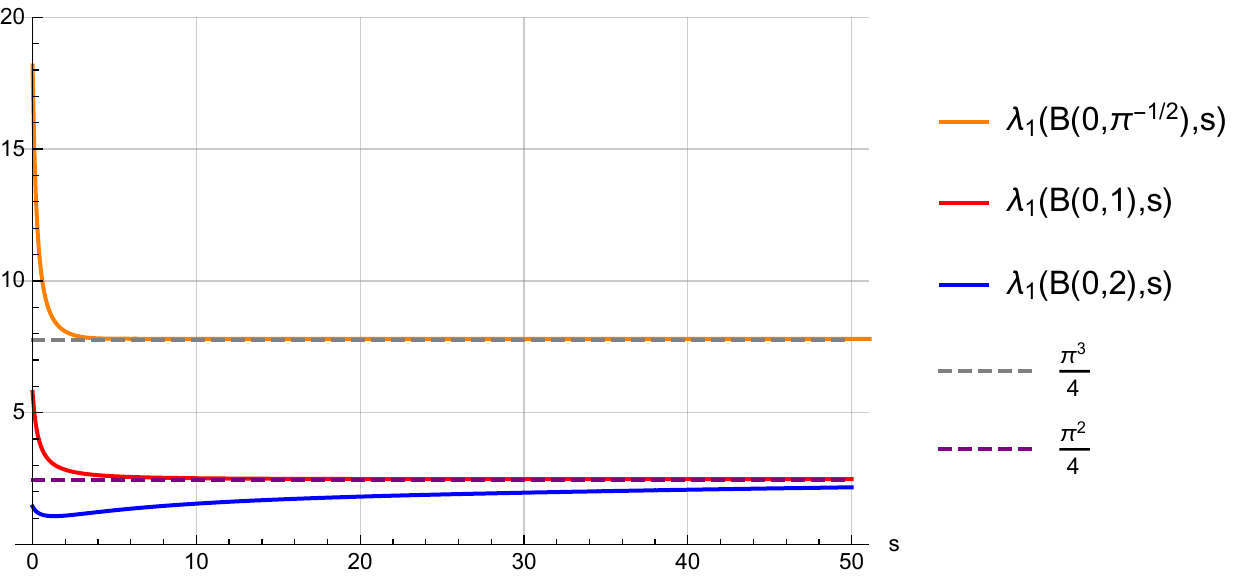}
  \captionof*{figure}{$\lambda_1(B(0,\pi^{-1/2}))$, $\lambda_1(B(0,1))$ and $\lambda_1(B(0,2))$  as a function of $s$.}
\end{figure}

We note that, as $s\rightarrow+\infty$, the first eigenvalue of the disks of radius $1$ and $2$ seems to behave like $\frac{\pi^2}{4}$, exactly as $\lambda_1(\Omega_1\times\Omega_2)$ with $|\Omega_1|\geq 2$ (which means, lenght of the side parallel to the $x_1$-axis greater than $2$).

We note that $\frac{\pi^2}{4}$ is exactly the first Dirichlet eigenvalue of an interval of length $2$. On the other hand, the value $\frac{\pi^3}{4}$, the expected limit of the first eigenvalue of the disk of area $1$ as $s\to+\infty$, is the first Dirichlet eigenvalue of an interval of length $2\pi^{-1/2}$.

It looks like that the behavior of the first eigenvalue of a domain, as $s\to+\infty$, is determined by the length of the longest segment parallel to the $x_1$-axis contained in $\Omega\cap \{|x_1|<1\}$, which is $2$ in the case of $B(0,1)$, $B(0,2)$ and of any rectangle $\Omega_1\times\Omega_2$ with $|\Omega_1|\geq 2$, and is $2\pi^{-1/2}$ for $B(0,\pi^{-1/2})$. We will consider these issues from an analytical point of view in future works. At any rate, we are left with the following

{\bf Question.} Does $\lambda_1(\Omega)\to\lambda_1((0,L))$ as $s\to+\infty$, where $L$ is the lenght of the longest segment parallel to the $x_1$-axis contained in $\Omega\cap \{|x_1|<1\}$, and $\lambda_1((0,L))$ is the first Dirichlet eigenvalue on $(0,L)$?

\section*{Acknowledgements}

The authors are deeply thankful to Gabriele Santin ({\it FBK-ICT}) for the help with the numerical computations. 
The first and third authors acknowledge support of the SNSF project ``Bounds for the Neumann and Steklov eigenvalues of the biharmonic operator'', grant number 200021\_178736.
The first   author is member of the Gruppo Nazionale per l'Analisi
Matematica, la Probabilit\`a e le loro Applicazioni (GNAMPA) of the I\-sti\-tuto Naziona\-le di Alta Matematica (INdAM). The second author is member of the Gruppo Nazionale per le Strutture Algebriche, Geometriche e le loro Applicazioni (GNSAGA) of the Istituto Nazionale di Alta Matematica (INdAM).


\end{document}